\newtheorem{theorem}{Theorem}[section]
\newtheorem{definition}{Definition}[section]
\newtheorem{lemma}{Lemma}[section]
\newtheorem{corollary}{Corollary}[section]
\newtheorem{exa}{Example}
\newtheorem{Remark}{Remark}[section]
\newtheorem{Note}{Note}[section]
\newtheorem{Proposition}{Proposition}[section]
\journal{Journal of \LaTeX\ Templates}
\newcommand{\least}{\let\cs=\@currsize\renewcommand{\baselinestretch}{.9}\tiny\CS}
\begin{document}
\begin{frontmatter}
	\title{ Space of Invariant bilinear forms under  representation of a group of order 8 }
	
	\author[dam]{Dilchand Mahto \corref{cor1}}\ead{dilchandiitk@gmail.com}
	
	\author[dbm]{Jagmohan Tanti\corref{cor1}}\ead{jagmohan.t@gmail.com}
	
	\address[dam]{Department of Mathematics, Central University of Jharkhand, Ranchi, India}
	\address[dbm]{Department of Mathematics, Babasaheb Bhimrao Ambedkar University, Lucknow, India}
	
	
	
	
	
	
	
	
	\begin{abstract}
		Let $G$ be a  group  of   order $8$ and $\mathbb{F}$  an algebraically closed field with char$(\mathbb{F})\neq 2$. In this paper we compute the  number of $n$ degree representations of  $G$   and subsequent  dimensions of the corresponding spaces of  invatiant bilinear forms  over the field $\mathbb{F}$. We explicitly discuss about the existence of  non-degenerate  invariant  bilinear forms.
	\end{abstract}
	
	\begin{keyword}
		\texttt Invariant bilinear  forms \sep Representation of groups \sep  Vector spaces \sep  Direct sums
		\MSC[2010] 15A63\sep  11E04 \sep 06B15 \sep 15A03
	\end{keyword}
	
\end{frontmatter}


\section{Introduction}
\begin{definition}
	A homomorphism  $\rho$ : $G$ $\rightarrow$ GL($\mathbb{V}(\mathbb{F})$) is called a representation of the group $G$, where $\mathbb{V(\mathbb{F})}$ is a finite dimensional  vector space over $\mathbb{F}$.  $\mathbb{V(\mathbb{F})}$ is also called a representing space of $G$. The dimension of $\mathbb{V}(\mathbb{F})$ over $\mathbb{F}$ is called degree of the representation $\rho$.
\end{definition}
\begin{definition}
	A bilinear form on a finite dimensional vector space $\mathbb{V}(\mathbb{F})$ is said to be invariant under the representation $\rho$ of a finite group $G$ if 
	$$\mathbb{B}(\rho(g)x,\rho(g)y)=\mathbb{B}(x,y), \mbox{ $\forall$ g $\in$ $G$ and x,y $\in\mathbb{V}(\mathbb{F})$}.$$\\
\end{definition}
\noindent Let $\Xi$ denotes the space of bilinear forms on the vector space  $\mathbb{V}(\mathbb{F})$ over $\mathbb{F}$.  
\begin{definition} The space of invariant bilinear forms under the representation $\rho$ is given by
	$$\Xi_{G} = \{\mathbb{B} \in \Xi \,\, |\, \mathbb{B}(\rho(g)x,\rho(g)y)=\mathbb{B}(x,y), \mbox{ $\forall$ $g$ $\in$ $G$ and x,y $\in$ $\mathbb{V}(\mathbb{F})$}\}.$$
\end{definition}

Let  $G$ be a group of order 8, $n \in \mathbb{Z}^{+}$, $\mathbb{F}$ an algebraically closed field with char $\neq$ 2 $\&$ ($\rho$, $\mathbb{V(\mathbb{F})}$)  an $n$ degree representation of $G$ over  $\mathbb{F}$. Then the corresponding  set $\Xi_{G}$ of  invariant bilinear forms on $\mathbb{V(\mathbb{F})}$ under $\rho$, forms a subspace of  $\Xi$. In this paper our investigation pertains to following questions.\\
\\
\textbf{Question.} How many n degree  representations (upto isomosphism) of  $G$  can be  there ? What is the dimension of  $\Xi_{G}$  for every n degree representation ? What are the necessary and sufficient conditions for the existence of a non-degenerate invariant bilinear form.\\

\noindent The  representation ($\rho$, $\mathbb{V(\mathbb{F})}$) is irreducible if there  doesn't exist any  proper invariant  subspace of $\mathbb{V(\mathbb{F})}$ under the  representation $\rho$.   Frobenius (see pp 319, Theorem (5.9) \cite{Artin}) showed that there are only finitely many  irreducible representations  of $G$. Therefore the number of irreducible characters is finite. 
Also by Maschke's theorem ( see pp 316,  corollary (4.9) \cite{Artin}) every $n$ degree representation of $G$ can be written as a direct sum of copies of irreducuble representations $\rho_{i}, i= 1,2,3,..., r$, where $r$ is the number of irreducible representations which is  same as the number of  conjugacy classess \cite{Serr} of  $G$. For  $\rho$ = $\oplus_{i=1}^{r}k_{i}\rho_{i}$ an n degree representation of $G$, the coefficient of $\rho_{i}$ is $k_{i}$, $1 \leq i \leq r$, so that $\sum_{i=1}^{r} d_{i}k_{i}=n, $ and $\sum_{i=1}^{r} d_{i}^{2}= |G|, $ where $d_{i}$ is the degree of $\rho_{i}$  and $d_{i}||G|$ with  $d_{j}\geq d_{i}$ when $j>i$. It is already well understood in the literature that  the   invariant space $\Xi_{G}$ under $\rho$ can be expressed by the set  $\Xi_{G}'$ = $\{X \in \mathbb{M}_{n}(\mathbb{F})\,|\, C_{\rho(g)}^{t}X C_{\rho(g)} = X, \forall g \in G  \}$ with respect to an ordered basis $\underline{e}$ of $\mathbb{V}(\mathbb{F})$, where  $\mathbb{M}_{n}(\mathbb{F})$ is the set of square matrices of order $n$ as the entries from $\mathbb{F}$ and  $C_{\rho(g)}$ is the matrix representation of the linear transformation $\rho(g)$ .\\\\
In the discrete perspective this question has been studied in the literature. Gongopadhyay and Kulkarni \cite{Gong2} investigated the  existence of T-invariant non-degenerate symmetric (resp. skew-symmetric) bilinear forms. Kulkarni and Tanti \cite{Kulk} investigated the  dimension of space of  T-invariant bilinear forms.  Gongopadhyay,  Mazumder and Sardar \cite{Gong} invesigated for  an invertible linear map $T : V \rightarrow V$, when does the vector space
$V$ over $F$ admit a T-invariant non-degenerate c-hermitian form.  Chen \cite{Chen} discussed the all matrix representation of the real numbers.  The application of representation of  group, the character for a reducible representation is important for the  physical problem\cite{DDJ}. For a cubic crystal has many symmetry operations and therefore many
classes and many irreducible representations. The connection between group theory
and quantum mechanics \cite{MGA}, the group of symmetry operators 
which leave the Hamiltonian invariant. These operators 
are symmetry operations of the system and the symmetry operators  commute with the
Hamiltonian. The symmetry operators  are said to form the
group of Schrödinger’s equation.\\

In this paper we investigate about the  counting of n degree representations of a group of order 8, dimensions of their corresponding  spaces of invariant bilinear forms and establish a characterization criteria for existence of a non-degenerate invaiant  bilinear form. Our  investigations  are stated in the following three main theorems. \\
\begin{theorem}
	The number of  n degree representations (upto isomorphism) of a group $G$ of order 8  is $\binom{n+7}{7}$ when $G$ is abelian and  $\sum_{s=0}^{[\frac{n}{2}]}\binom{n-2s+3}{3}$ otherwise.
	\label{theorem1.1}
\end{theorem}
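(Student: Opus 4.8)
The plan is to convert the enumeration of $n$ degree representations up to isomorphism into the counting of non-negative integer solutions of a single linear equation. By Maschke's theorem every $n$ degree representation is isomorphic to $\oplus_{i=1}^{r}k_i\rho_i$ with $k_i \in \mathbb{Z}^{+}\cup\{0\}$, and this decomposition is unique: the multiplicity $k_i$ is recovered from the character as an inner product, so two representations are isomorphic if and only if their multiplicity tuples $(k_1,\dots,k_r)$ agree. Hence the first step is to record that the quantity we want is exactly the number of tuples $(k_1,\dots,k_r)$ of non-negative integers obeying the degree constraint $\sum_{i=1}^{r} d_i k_i = n$ noted in the introduction. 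Everything then reduces to knowing the multiset of degrees $d_i$ in the abelian and non-abelian cases.

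Next I would pin down the $d_i$. Since the number of irreducible representations equals the number of conjugacy classes and $\sum_{i=1}^{r} d_i^2 = |G| = 8$, the abelian case forces $r = 8$ with every $d_i = 1$, because an abelian group over an algebraically closed field has $|G|$ one dimensional irreducibles. The equation to solve is then $k_1 + \dots + k_8 = n$, and a standard stars and bars argument gives $\binom{n + 8 - 1}{8 - 1} = \binom{n+7}{7}$ solutions, which is the asserted abelian count.

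For the non-abelian case (the groups $D_4$ and $Q_8$) I would verify that each has five conjugacy classes and exactly four one dimensional representations, since the commutator subgroup has index $4$; thus $4\cdot 1^2 + d_5^2 = 8$ forces the degree list $1,1,1,1,2$. The constraint becomes $k_1 + k_2 + k_3 + k_4 + 2k_5 = n$. The key idea is to condition on the multiplicity $k_5 = s$ of the unique two dimensional irreducible: for each fixed $s$ with $0 \le s \le [\frac{n}{2}]$ the remaining equation $k_1 + k_2 + k_3 + k_4 = n - 2s$ has $\binom{(n-2s)+3}{3}$ solutions, and summing over $s$ yields $\sum_{s=0}^{[\frac{n}{2}]}\binom{n-2s+3}{3}$. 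The only genuine obstacle is the bookkeeping that makes this reduction a true bijection, namely the uniqueness of the irreducible decomposition together with the correct identification of the degree multiset across the five isomorphism types of groups of order $8$; once these are in hand both formulas follow from elementary counting.
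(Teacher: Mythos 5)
Your proposal is correct and follows essentially the same route as the paper: both reduce the count to the number of non-negative integer tuples $(k_1,\dots,k_r)$ satisfying $\sum_i d_i k_i = n$ (using Maschke's theorem and the character criterion for isomorphism), then enumerate solutions of $k_1+\cdots+k_8=n$ in the abelian case and of $k_1+k_2+k_3+k_4+2k_5=n$ in the non-abelian case. Your explicit conditioning on $k_5=s$ together with stars and bars actually supplies the elementary counting detail that the paper delegates to Lemma \ref{lemma3.1} without further justification.
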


\begin{theorem}
	The space $\Xi_{G}$ of invariant bilinear forms  of a group $G$  of order 8  under an n degree representation $(\rho ,\mathbb{V}(\mathbb{F}))$ is isomorphic to the direct sum of  the subspaces $\mathbb{W}_{(i,j) \in A_G}$ of $\mathbb{M}_{n}(\mathbb{F})$, i.e., $\Xi_{G}'$ = $\bigoplus_{(i,j) \in A_G} \mathbb{W}_{(i,j) \in A_G}$, where $A_G=\{(i,j)\,|\, \, \rho_i$ and $\rho_j$  dual to each other \}  and  $\mathbb{W}_{(i,j) \in A_G}$ = \{ $ X \in \mathbb{M}_{n}(\mathbb{F})\, |\, (i,j)^{th}$  block   $ X_{d_{i}k_{i} \times d_jk_j}^{ij} $  of order $d_{i}k_{i} \times d_jk_j $ satisfying $X_{d_{i}k_{i} \times d_jk_j}^{ij}=  C_{k_{i}\rho_{i}(g)}^{t}X_{d_{i}k_{i} \times d_jk_j}^{ij}C_{k_{j}\rho_{j}(g)}$, $\forall g \in G$  and rest block is zero \}. Also the  dimension of $\mathbb{W}_{(i,j) \in A_G}= k_ik_j$.
	\label{theorem1.2}
\end{theorem}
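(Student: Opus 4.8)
The plan is to reduce the matrix equation $C_{\rho(g)}^t X C_{\rho(g)} = X$ to a family of independent block equations and then interpret each block as an intertwining operator to which Schur's lemma applies. First I would fix the ordered basis adapted to the decomposition $\rho = \bigoplus_{i=1}^r k_i\rho_i$, so that $C_{\rho(g)}$ is block diagonal, $C_{\rho(g)} = \mathrm{Diag}[C_{k_1\rho_1(g)},\dots,C_{k_r\rho_r(g)}]$. Writing $X=(X^{ij})$ in the compatible block decomposition, the $(i,j)$ block of $C_{\rho(g)}^t X C_{\rho(g)}$ is exactly $C_{k_i\rho_i(g)}^t X^{ij} C_{k_j\rho_j(g)}$. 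Hence the single equation $C_{\rho(g)}^t X C_{\rho(g)} = X$ holds for all $g$ if and only if $C_{k_i\rho_i(g)}^t X^{ij} C_{k_j\rho_j(g)} = X^{ij}$ holds for every pair $(i,j)$ and every $g \in G$. Since the blocks may be prescribed independently, this yields a direct sum $\Xi_G' = \bigoplus_{i,j}\mathbb{W}^{ij}$, where $\mathbb{W}^{ij}$ consists of those $X$ whose only possibly nonzero block is the $(i,j)$ one subject to this constraint; the sum is direct because the supports of the summands are disjoint.

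Next I would recast the block constraint as an intertwining relation. Rearranging $C_{k_i\rho_i(g)}^t X^{ij} C_{k_j\rho_j(g)} = X^{ij}$ gives $X^{ij} C_{k_j\rho_j(g)} = \bigl(C_{k_i\rho_i(g)}^{-1}\bigr)^t X^{ij}$. Since the matrix of the contragredient representation $k_i\rho_i^*$ is precisely $\bigl(C_{k_i\rho_i(g)}^{-1}\bigr)^t$, this says that $X^{ij}$, viewed as a linear map, intertwines $k_j\rho_j$ with $k_i\rho_i^*$. Equivalently, regarding a bilinear form as the linear map $\phi$ defined by $\phi(x)(y)=\mathbb{B}(x,y)$ from $\mathbb{V}(\mathbb{F})$ to its dual, invariance of $\mathbb{B}$ is exactly the statement that $\phi$ is $G$-equivariant from $(\rho,\mathbb{V}(\mathbb{F}))$ to $(\rho^*,\mathbb{V}(\mathbb{F})^*)$. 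Thus $\mathbb{W}^{ij} \cong \mathrm{Hom}_G(k_j\rho_j,\, k_i\rho_i^*)$, which records the earlier identification of $\Xi_G$ with $\Xi_G'$ at the level of each block.

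Finally I would invoke Schur's lemma, available because $\mathrm{char}(\mathbb{F})\neq 2$ does not divide $|G|=8$, so by Maschke's theorem the representations are completely reducible and, $\mathbb{F}$ being algebraically closed, $\mathrm{Hom}_G(\rho_j,\rho_i^*)$ is one dimensional when $\rho_j\cong\rho_i^*$ and zero otherwise. Consequently $\mathrm{Hom}_G(k_j\rho_j, k_i\rho_i^*)\cong \mathrm{Hom}_G(\rho_j,\rho_i^*)^{\oplus k_ik_j}$ is nonzero precisely when $\rho_i$ and $\rho_j$ are dual to each other, with dimension $k_ik_j$ in that case. Discarding the pairs that contribute zero leaves the sum indexed by $A_G=\{(i,j)\mid \rho_i,\rho_j \text{ dual}\}$, giving $\Xi_G' = \bigoplus_{(i,j)\in A_G}\mathbb{W}_{(i,j)\in A_G}$ with $\dim \mathbb{W}_{(i,j)\in A_G}=k_ik_j$, as claimed. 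I expect the main obstacle to be purely bookkeeping: pinning down the transpose and inverse conventions so that the block equation matches the contragredient representation exactly, and verifying that the multiplicity space has dimension $k_ik_j$ rather than some other combination of the multiplicities; once that is settled, the representation-theoretic content is an immediate application of Schur's lemma.
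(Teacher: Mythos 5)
Your proposal is correct, but it takes a genuinely different route from the paper. The paper's proof has the same first step as yours (decompose $X$ into blocks compatible with $C_{\rho(g)}=\mathrm{Diag}[C_{k_1\rho_1(g)},\dots,C_{k_r\rho_r(g)}]$ and verify existence and uniqueness of the resulting direct-sum decomposition), but for the crucial facts --- that the $(i,j)$ block vanishes identically when $(i,j)\notin A_G$ and that each block with $(i,j)\in A_G$ carries exactly $k_ik_j$ free parameters --- it appeals to an explicit, case-by-case computation (Remark 3.1) in which the invariant blocks are written out for each of the five groups of order $8$ from their tables of irreducible representations; the authors explicitly advertise this as ``an elementary application of matrix theory.'' You instead recast the block equation $C_{k_i\rho_i(g)}^{t}X^{ij}C_{k_j\rho_j(g)}=X^{ij}$ as the intertwining relation $X^{ij}C_{k_j\rho_j(g)}=(C_{k_i\rho_i(g)}^{-1})^{t}X^{ij}$, identify $\mathbb{W}^{ij}$ with $\mathrm{Hom}_G(k_j\rho_j,k_i\rho_i^{*})$, and invoke Schur's lemma (valid here since $\mathbb{F}$ is algebraically closed and $\mathrm{char}(\mathbb{F})\nmid 8$) to get dimension $k_ik_j$ when $\rho_j\cong\rho_i^{*}$ and $0$ otherwise. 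Your argument is shorter, avoids all case analysis, and proves the statement for an arbitrary finite group rather than just groups of order $8$; what it does not deliver is the explicit shape of the invariant blocks (e.g.\ the $x^{ii}I_2$ pattern for $D_4$ and the $x^{ii}I_2^{-}$ pattern for $Q_8$), which the paper reuses later for the symmetric/skew-symmetric dimension counts and the non-degeneracy characterization. One small bookkeeping point to make explicit if you write this up: duality of irreducibles is a symmetric relation ($\rho_i^{**}\cong\rho_i$), so ``$\rho_j\cong\rho_i^{*}$'' agrees with the paper's symmetric phrasing of $A_G$.
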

\begin{theorem}
	(Characterization theorem for an n degree representation of a group of order 8 having a non-degenerate invariant bilinear form).\\
	\textbf{A}.  For $G=D_4$ and $\rho$= $\oplus_{i=1}^{5} k_{i}\rho_{i}$ an $n$ degree representation, $\rho$  always has a non-degenerate invariant bilinear form.\\
	\textbf{B}. 	For $G=Q_8$ and $\rho$= $\oplus_{i=1}^{5} k_{i}\rho_{i}$  an n degree representation, $\rho$ always has a non-degenerate invariant bilinear form.\\
	\textbf{C}. 	For $G=\mathbb{Z}_8$ and $\rho$= $\oplus_{i=1}^{8} k_{i}\rho_{i}$ an n degree representation, $\rho$ has a non-degenerate invariant bilinear form  iff $k_3=k_4, \, \, k_5=k_6 \, \& \, k_7=k_8$ \\
	\textbf{D}. 	For $G=\mathbb{Z}_4 \times \mathbb{Z}_2 $ and $\rho$= $\oplus_{i=1}^{8} k_{i}\rho_{i}$  an n degree representation, $\rho$ has a non-degenerate invariant bilinear  form  iff $k_5=k_6 \, \& \, k_7=k_8.$\\
	\textbf{E}.  	For $G=\mathbb{Z}_2 \times \mathbb{Z}_2 \times \mathbb{Z}_2$ and $\rho$= $\oplus_{i=1}^{8} k_{i}\rho_{i}$ an n degree representation, $\rho$ always  has  a non-degenerate invariant bilinear  form.
	\label{theorem1.3}
\end{theorem}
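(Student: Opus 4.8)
\section{Proof strategy for Theorem \ref{theorem1.3}}

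The plan is to reduce the existence of a non-degenerate invariant bilinear form to the invertibility of a single block matrix and then to read off the answer from the character table of each group. A form $\mathbb{B}$ is non-degenerate exactly when its matrix $X \in \Xi_G'$ is invertible, so it suffices to decide when $\Xi_G'$ contains an invertible matrix. By Theorem \ref{theorem1.2}, after reordering the basis so that all copies of each $\rho_i$ are grouped together, the only nonzero blocks of $X$ are the $X^{ij}$ with $(i,j) \in A_G$. I would first note that $A_G$ splits the index set into two types: a self-dual index $i$ (where $\rho_i$ is dual to itself), which contributes a single diagonal block $X^{ii}$, and a genuine dual pair $\{i,j\}$ with $i \neq j$, which contributes the two off-diagonal blocks $X^{ij}, X^{ji}$ and forces $X^{ii}=X^{jj}=0$. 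Regrouping accordingly, $X$ becomes block-diagonal with one super-block per self-dual index and one super-block $\left(\begin{smallmatrix} 0 & X^{ij} \\ X^{ji} & 0 \end{smallmatrix}\right)$ per dual pair; hence $X$ is invertible iff every super-block is invertible.

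Next I would analyze the two kinds of super-block separately. For a self-dual $\rho_i$, Schur's lemma shows that the invariant form relating any two copies of $\rho_i$ is a scalar multiple of one fixed non-degenerate atomic form $B_i$ on a single copy, so $X^{ii}=M\otimes B_i$ for some $M\in\mathbb{M}_{k_i}(\mathbb{F})$, consistent with $\dim \mathbb{W}_{(i,i)}=k_i^2$. Since $B_i$ is invertible, $X^{ii}$ is invertible iff $M$ is, and taking $M=I_{k_i}$ shows that a self-dual index never obstructs non-degeneracy. For a dual pair $\{i,j\}$ with $i\neq j$ (so $d_i=d_j$), the super-block can be invertible only if its off-diagonal block $X^{ij}$ of size $d_ik_i\times d_jk_j$ is square, i.e. $d_ik_i=d_jk_j$, which because $d_i=d_j$ means $k_i=k_j$; in that case the determinant equals $\pm\det(X^{ij})\det(X^{ji})$ and both blocks may be chosen invertible. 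Assembling these two observations, $\rho$ admits a non-degenerate invariant bilinear form iff $k_i=k_j$ holds for every dual pair with $i\neq j$.

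It then remains to identify the self-dual indices and the dual pairs of each group from its character table, using that $\rho_i$ is self-dual iff its character is real-valued (for a one-dimensional $\rho_i$ this means values in $\{1,-1\}$). For $D_4$ and $Q_8$ all five irreducible characters are real, so every index is self-dual and no condition is imposed, which gives parts \textbf{A} and \textbf{B}. For $\mathbb{Z}_2\times\mathbb{Z}_2\times\mathbb{Z}_2$ every character takes values in $\{1,-1\}$, so again all eight indices are self-dual, giving part \textbf{E}. For $\mathbb{Z}_8$ exactly two characters (the trivial one and $g\mapsto -1$) are real while the other six pair off into three dual pairs, producing the criterion $k_3=k_4,\ k_5=k_6,\ k_7=k_8$ of part \textbf{C}; for $\mathbb{Z}_4\times\mathbb{Z}_2$ four characters are real and the remaining four form two dual pairs, producing $k_5=k_6,\ k_7=k_8$ of part \textbf{D}.

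The step I expect to be the main obstacle is the self-dual case, namely verifying that the atomic form $B_i$ really is non-degenerate for the two-dimensional representations of $D_4$ and $Q_8$ and that it propagates across $k_i$ copies exactly as $M\otimes B_i$. This is where Schur's lemma and an explicit description of the invariant form on the single irreducible copy (symmetric for $D_4$, symplectic for $Q_8$) must be made precise. Once these atomic forms are shown to be invertible, the block-diagonal reduction turns the rest of the argument into a direct determinant computation together with routine character-table bookkeeping.
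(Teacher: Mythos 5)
Your proposal is correct and follows essentially the same route as the paper: both reduce the question to the block structure of $\Xi_G'$ indexed by the dual pairs $A_G$, observe that non-degeneracy forces each off-diagonal dual-pair block to be square (hence $k_i=k_j$) and invertible, and read the self-dual versus dual-pair classification off the character tables. The only cosmetic difference is that you justify the invertibility of the atomic form on a self-dual irreducible via Schur's lemma, where the paper instead exhibits the blocks explicitly (Remark 3.1, e.g.\ the $x\,I_2$ and $x\,I_2^-$ blocks for $D_4$ and $Q_8$) and constructs a non-singular representative directly in Lemmas 3.1--3.5.
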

\begin{Remark}
	Thus we get the necessary and sufficient condition for existence of a non-degenerate invarint bilinear form under an n degree representation from above characterization theorem. It is to remark that these results also hold equally good for a field (not necessarily algebraically closed) of characteristic $\equiv 1\pmod{8}$.
\end{Remark}
This is to note that concerning the existence of a non-degenerate invariant bilinear form on a given representation, it amounts to checking wether this representation is isomorphic to its dual. This can be done by looking at the character tables of the irreducible representations, inverting every character, and checking which dual (irreducible) representation is obtained. Then, in an isomorphism class of representations, specified by multiplicities of irreducible ones, one just needs to check if multiplicities are equal on every irreducible representation and its dual. One can see that at one hand the Theorem \ref{theorem1.3} takes care of all these conversations and on the other hand it has been proved in an elementary way as purely an application of Matrix theory.

\section{Preliminaries }
\noindent In this section  ($\rho$, $\mathbb{V(\mathbb{F})}$)   stands for  an n degree representation over an algebraically closed  field $\mathbb{F}$ with char$\not=2$ of a group $G$ of order 8. Also $\omega \in \mathbb{F}$ is primitive eighth root of unity.

\subsection{Irreducible representations of a group $G$ of order 8} \noindent Here we present the table of irreducible representations for each group of order $8$. We denote the table of $G$ by $T_{G}$.
\subsubsection{$D_4$ = $\langle a, b\,|\,a^{4}=b^{2}=1 , ba= a^{3}b \rangle$}

\begin{center}
	
	$T_{D_{4}} = $ 
	\begin{tabular}{|c|c|c|c|c|c|}
		\hline
		& $\rho_1$&$\rho_2$& $\rho_3$&$\rho_4$&$\rho_5$\\
		\hline
		a& $\omega^{8}$&$\omega^{8}$& $\omega^{4}$&$\omega^{4}$&$\begin{bmatrix}
			0 & \omega^{4} \\
			\omega^{8} & 0
		\end{bmatrix}$\\
		\hline
		b& $\omega^{8}$&$\omega^{4}$& $\omega^{4}$&$\omega^{8}$&$\begin{bmatrix}
			\omega^{8}&0 \\
			0&\omega^{4} 
		\end{bmatrix}$\\ 
		\hline
		
	\end{tabular}.
	
\end{center}

\subsubsection
{$Q_8$ =$\langle a, b\,|\,a^{4}=1 , a^{2}=b^{2}, ba= a^{3}b \rangle$}
\begin{center}
	$T_{Q_{8}} = $ 
	\begin{tabular}{|c|c|c|c|c|c|}
		\hline
		& $\rho_1$&$\rho_2$& $\rho_3$&$\rho_4$&$\rho_5$\\
		\hline
		a& $\omega^{8}$&$\omega^{8}$& $\omega^{4}$&$\omega^{4}$&$\begin{bmatrix}
			\omega^{2}&0 \\
			0 & \omega^{6} 
		\end{bmatrix}$\\
		\hline
		b& $\omega^{8}$&$\omega^{4}$& $\omega^{4}$&$\omega^{8}$&$\begin{bmatrix}
			0&	\omega^{8} \\
			\omega^{4}&0 
		\end{bmatrix}$\\
		\hline
	\end{tabular}.
\end{center}
\subsubsection{$\mathbb{Z}_{8}   $=$\langle a, \,|\,a^{8}=1  \rangle$}
\begin{center}
	$T_{\mathbb{Z}_{8}} = $ 
	\begin{tabular}{|c|c|c|c|c|c|c|c|c|}
		\hline
		& $\rho_1$&$\rho_2$& $\rho_3$&$\rho_4$&$\rho_5$&$\rho_6$&$\rho_7$&$\rho_8$\\
		\hline
		a& $\omega^{8}$&$\omega^{4}$& $\omega$&$\omega^{7}$&$\omega^{2}$&$\omega^{6}$&$\omega^{3}$&$\omega^{5}$\\
		\hline
	\end{tabular}.
\end{center}
\subsubsection{$\mathbb{Z}_{4} \times \mathbb{Z}_{2}  $ =$\langle a, b\,|\,a^{4}= b^{2}=1 ,ab= ba \rangle$}
\begin{center}
	$T_{\mathbb{Z}_{4} \times \mathbb{Z}_{2}} = $ 
	\begin{tabular}{|c|c|c|c|c|c|c|c|c|}
		\hline
		& $\rho_1$&$\rho_2$& $\rho_3$&$\rho_4$&$\rho_5$&$\rho_6$&$\rho_7$&$\rho_8$\\
		\hline
		a& $\omega^{8}$&$\omega^{8}$& $\omega^{4}$&$\omega^{4}$&$\omega^{2}$&$\omega^{6}$&$\omega^{2}$&$\omega^{6}$\\
		\hline
		b& $\omega^{8}$&$\omega^{4}$& $\omega^{8}$&$\omega^{4}$&$\omega^{8}$&$\omega^{8}$&$\omega^{4}$&$\omega^{4}$\\
		\hline
	\end{tabular}.
\end{center}
\subsubsection{$\mathbb{Z}_{2} \times \mathbb{Z}_{2} \times \mathbb{Z}_{2} $ =$\langle a, b,c\,|\,a^{2} =b^{2}=c^{2} =1 ,  ab=ba,ac=ca,bc=cb \rangle$}
\begin{center}
	$T_{\mathbb{Z}_{2} \times \mathbb{Z}_{2} \times \mathbb{Z}_{2}} = $ 	
	\begin{tabular}{|c|c|c|c|c|c|c|c|c|}
		\hline
		& $\rho_1$&$\rho_2$& $\rho_3$&$\rho_4$&$\rho_5$&$\rho_6$&$\rho_7$&$\rho_8$\\
		\hline
		a& $\omega^{8}$&$\omega^{4}$& $\omega^{8}$&$\omega^{8}$&$\omega^{8}$&$\omega^{4}$&$\omega^{4}$&$\omega^{4}$\\
		\hline
		
		b& $\omega^{8}$&$\omega^{4}$& $\omega^{4}$&$\omega^{4}$&$\omega^{8}$&$\omega^{4}$&$\omega^{8}$&$\omega^{8}$\\
		\hline
		
		c& $\omega^{8}$&$\omega^{4}$& $\omega^{8}$&$\omega^{4}$&$\omega^{4}$&$\omega^{8}$&$\omega^{8}$&$\omega^{4}$\\
		\hline
	\end{tabular}.
\end{center}

\begin{definition}
	The  character  of $\rho$ is a  function    $\chi$ : $G$ $\rightarrow$ $\mathbb{F}$,  $\chi(g)=$ tr($\rho(g)$)  and is also called character of the group $G$.
\end{definition}
\begin{theorem}
	(Maschke's Theorem): If char($\mathbb{F}$) does not divide $|G |$, then every representation of  $G$ is a direct sum of irreducible representations.
\end{theorem}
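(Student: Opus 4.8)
The plan is to prove the equivalent statement that every representation is \emph{completely reducible}, proceeding by induction on $\dim_{\mathbb{F}} V$. If $V$ is zero-dimensional or irreducible there is nothing to prove, so suppose $V$ admits a proper nonzero $G$-invariant subspace $W$. It then suffices to produce a $G$-invariant subspace $W'$ with $V = W \oplus W'$: the inductive hypothesis applied to $W$ and $W'$, each of strictly smaller dimension, expresses both as direct sums of irreducibles, and hence so is $V$.

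To construct $W'$, I would first choose any $\mathbb{F}$-linear projection $\pi : V \to V$ with image $W$ and $\pi|_W = \mathrm{id}_W$; such a map exists because $W$ is a direct summand of $V$ as a plain vector space, ignoring the $G$-action. This $\pi$ need not commute with $\rho$, so the idea is to symmetrize it by averaging over the group:
$$\tilde\pi = \frac{1}{|G|}\sum_{g \in G} \rho(g)\,\pi\,\rho(g)^{-1}.$$
Here is precisely where the hypothesis $\mathrm{char}(\mathbb{F}) \nmid |G|$ enters: it guarantees that $|G|$ is a unit in $\mathbb{F}$, so that the scalar $1/|G|$ is defined.

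The key verifications are then the following: (i) $\tilde\pi$ maps $V$ into $W$, since each summand $\rho(g)\pi\rho(g)^{-1}$ does, $W$ being $G$-invariant; (ii) $\tilde\pi$ restricts to the identity on $W$, using $\pi|_W = \mathrm{id}$ together with invariance of $W$, so that $\tilde\pi$ is genuinely a projection onto $W$; and (iii) $\tilde\pi$ is $G$-equivariant, i.e. $\rho(h)\tilde\pi = \tilde\pi\,\rho(h)$ for all $h \in G$, which follows from re-indexing the averaging sum via $g \mapsto h^{-1}g$. Granting these, I would set $W' := \ker\tilde\pi$, which is $G$-invariant by equivariance, and conclude $V = \mathrm{im}\,\tilde\pi \oplus \ker\tilde\pi = W \oplus W'$ because $\tilde\pi$ is idempotent.

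The main obstacle is the equivariance check (iii) together with confirming in (i)--(ii) that the averaged operator remains an honest projection onto $W$; these are short re-indexing computations, but they are the crux of the argument, and all three rest on the single arithmetic fact that $|G|$ is invertible in $\mathbb{F}$. In the present setting $|G| = 8$ and $\mathrm{char}(\mathbb{F}) \neq 2$, so this invertibility is automatic and the theorem applies throughout the paper.
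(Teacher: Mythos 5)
Your proof is correct and complete. The paper itself does not prove Maschke's theorem; it simply cites Artin (Corollary 4.9, p.~316), so there is no internal argument to compare against. What you have written is the standard averaging proof: take any linear projection onto the invariant subspace $W$, symmetrize it by $\tilde\pi = \frac{1}{|G|}\sum_{g\in G}\rho(g)\pi\rho(g)^{-1}$, check that the average is still a projection onto $W$ and is now $G$-equivariant, and take its kernel as the invariant complement; induction on dimension finishes the claim. This is essentially the argument in the cited reference, and you correctly isolate the only place the hypothesis $\mathrm{char}(\mathbb{F})\nmid|G|$ is used, namely the invertibility of $|G|$ in $\mathbb{F}$ --- which, as you note, is automatic here since $|G|=8$ and $\mathrm{char}(\mathbb{F})\neq 2$. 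The one thing your write-up leaves as a ``key verification'' rather than carrying out is the re-indexing computation for equivariance, but that is a one-line substitution $g\mapsto hg$ in the sum and does not constitute a gap.
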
 

\begin{proof} 
	See pp 316,   corollary (4.9)  \cite{Artin}.
\end{proof}

\noindent Now as
\begin{equation}
	\rho= k_{1} \rho_{1}\oplus k_{2} \rho_{2}\oplus  ............. \oplus k_{r} \rho_{r},
	\label{directsum}
\end{equation}
where for every $1 \leq i \leq r$, $k_{i} \rho_{i}$ stands for the direct sum of  $k_{i}$ copies of the irreducible representation $\rho_{i}$.

\noindent Let $\chi$ be the corresponding character of the representation $\rho$, then  
$$ \chi= k_{1} \chi_{1}+ k_{2} \chi_{2}+  ............. + k_{r} \chi_{r}.$$
Where $\chi_{i}$ is the character of $\rho_{i}$, $\forall$ $1 \leq i \leq r$.
Dimension of the character $\chi$ is being calculated at identity element of a group. i.e,
\[ dim(\rho) = \chi(1)=tr(\rho(1))\]
\begin{equation}
	\implies d_{1} k_{1} + d_{2}k_{2} +............. +d_{r}k_{r}= n. 
	\label{splitn}
\end{equation}
\begin{Note}
	equation (\ref{splitn}) holds in more general case which  helps us in finding all  possible distinct r-tuples ($k_{1}, k_{2}, ......, k_{r}$), which correspond to the distinct n degree  representations (up to isomorphism) of a finite group.
\end{Note}

\begin{theorem}
	Two representations ($\rho, \mathbb{V(\mathbb{F})}$) and ($\rho', \mathbb{V(\mathbb{F})}$)  are  isomorphic iff their character tables are same i.e, $\chi(g)=\chi'(g)$ for all g $\in G$.
	\label{theorem2.2}
\end{theorem}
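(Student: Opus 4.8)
The plan is to prove the two implications separately, with the forward direction being immediate and the converse carrying the real content. For the forward direction, suppose $\rho \cong \rho'$, so that there is an invertible $T \in \mathbb{M}_n(\mathbb{F})$ with $\rho'(g) = T\rho(g)T^{-1}$ for every $g \in G$. Taking traces and using the conjugation-invariance of the trace, $\chi'(g) = \mathrm{tr}(T\rho(g)T^{-1}) = \mathrm{tr}(\rho(g)) = \chi(g)$ for all $g$, so the character tables agree. This step is routine.

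For the converse, suppose $\chi(g) = \chi'(g)$ for all $g \in G$. Since $\mathrm{char}(\mathbb{F}) \neq 2$ does not divide $|G| = 8$, Maschke's Theorem applies and lets me write $\rho = \oplus_{i=1}^r k_i \rho_i$ and $\rho' = \oplus_{i=1}^r k_i' \rho_i$ as in (\ref{directsum}), with corresponding character expansions $\chi = \sum_{i=1}^r k_i \chi_i$ and $\chi' = \sum_{i=1}^r k_i' \chi_i$. The hypothesis then forces $\sum_{i=1}^r (k_i - k_i')\chi_i = 0$ as a function on $G$. The key step is to show that the irreducible characters $\chi_1, \ldots, \chi_r$ are linearly independent over $\mathbb{F}$; granting this, each coefficient $k_i - k_i'$ vanishes, so the two decompositions coincide as multisets of irreducibles and hence $\rho \cong \rho'$.

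The main obstacle is precisely this linear independence of the irreducible characters. I would derive it from the character orthogonality relations $\langle \chi_i, \chi_j \rangle = \delta_{ij}$, where $\langle \phi, \psi \rangle = \tfrac{1}{|G|}\sum_{g \in G} \phi(g)\psi(g^{-1})$; these hold over an algebraically closed field whose characteristic does not divide $|G|$, and they rest on Schur's Lemma applied to the averaged intertwiner $\tfrac{1}{|G|}\sum_{g \in G} \rho_j(g^{-1}) M \rho_i(g)$. Pairing the relation $\sum_i (k_i - k_i')\chi_i = 0$ against each $\chi_j$ then isolates $k_j - k_j' = 0$ directly. Since all five groups of order $8$ have their irreducible characters listed explicitly in the tables $T_G$ above, there is an alternative and entirely elementary route: one may simply verify the linear independence of the rows of each $T_G$ by inspection of these finite tables, avoiding the orthogonality machinery altogether and matching the paper's stated preference for elementary matrix-theoretic reasoning.
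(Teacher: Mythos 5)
The paper offers no argument of its own here: its ``proof'' is just the citation to Corollary 5.13 of Artin. What you have written is the standard textbook proof underlying that citation --- trace-invariance under conjugation for the forward direction, and Maschke plus linear independence of the irreducible characters (via orthogonality and Schur's Lemma) for the converse --- so you are supplying the omitted argument rather than taking a genuinely different route.

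There is, however, one real gap at the last step of the converse. Linear independence of $\chi_1,\dots,\chi_r$ over $\mathbb{F}$ gives $k_i-k_i'=0$ \emph{as an element of} $\mathbb{F}$: pairing $\sum_i(k_i-k_i')\chi_i=0$ against $\chi_j$ yields $(k_j-k_j')\cdot 1_{\mathbb{F}}=0$. In characteristic $p>0$ (the paper only excludes $p=2$) this forces $k_j\equiv k_j'\pmod{p}$, not $k_j=k_j'$ as integers, and the statement genuinely fails there: for $p=3$ and two distinct one-dimensional irreducibles $\rho_1,\rho_2$ of $G$, the degree-$3$ representations $3\rho_1$ and $3\rho_2$ have identical $\mathbb{F}$-valued characters (both are the zero function, since $3=0$ in $\mathbb{F}$) yet are not isomorphic. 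So your argument is complete in characteristic $0$, and in characteristic $p\nmid |G|$ it only recovers the multiplicities modulo $p$; an extra hypothesis such as $n<p$ is needed to conclude. A smaller caution on your ``elementary'' alternative: what must be checked is independence of the characters as functions on all of $G$ (equivalently, that the full $r\times r$ character table has nonzero determinant in $\mathbb{F}$), not independence of the rows of the generator tables $T_G$ displayed in the paper, and that determinant condition again depends on $\operatorname{char}(\mathbb{F})$.
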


\begin{proof}
	See  pp 319,  corollary ( 5.13) \cite{Artin}.
\end{proof}
\section{Existence of non-degenerate invariant bilinear  form.}
\indent The space of invariant  bilinear forms under a finite  group may have   non-degenerate and  degenerate forms. Sometimes  all the elements of the space are degenerate with respect to  a particular   representation, such a space is called a degenerate invariant space in this paper. How many such repsentations exist out of  total representations, is a matter of investigation. Some of the spaces contains both non-degenerate and degenerate invariant  bilinear forms under   a particular representation. In this section we compute the number of such representations of the group $G$ of order 8.\\
\begin{Remark}
	The space $\Xi_{G}'$  of invariant bilinear forms under an n degree reprsentation $\rho$ contains only those $X \in \mathbb{M}_{n}(\mathbb{F})$ whose  $(i,j)^{th}$ block is a O sub-matrix of order ${d_{i}k_{i}\times d_{j}k_{j}}$ when $(i,j) \notin A_G=\{(i,j)\,|\,\, \rho_i$  and $\rho_j$  is dual to each other \}  whereas  the  $(i,j)^{th}$ block of order $d_ik_i \times d_jk_j$, is given by
	
	For $G \neq \mathbb{Z}_8$ and $(i,j) = (1,1),(2,2),(3,3),(4,4)$, it is 
	$$X^{ij}_{d_ik_i \times d_jk_j}= 
	\begin{bmatrix}
		x^{ij}_{11}&x^{ij}_{12}&...&...&...&x^{ij}_{1k_{j}}\\
		x^{ij}_{21}&x^{ij}_{22}&...&...&...&x^{ij}_{2k_{j}}\\
		...&...&...&...&...&...\\
		...&...&..&...&...&...\\
		x^{ij}_{k_{i}1}&x^{ij}_{k_{i}2}&...&...&...&x^{ij}_{k_{i}k_{j}}\\
	\end{bmatrix},$$
	and for $G=\mathbb{Z}_8$ and  $(i,j)= (1,1),(2,2),(3,4),(4,3),(5,6),(6,5),(7,8),(8,7)$, it is 
	
$$X^{ij}_{d_ik_i \times d_jk_j}= 
	\begin{bmatrix}
		x^{ij}_{11}&x^{ij}_{12}&...&...&...&x^{ij}_{1k_{j}}\\
		x^{ij}_{21}&x^{ij}_{22}&...&...&...&x^{ij}_{2k_{j}}\\
		...&...&...&...&...&...\\
		...&...&..&...&...&...\\
		x^{ij}_{k_{i}1}&x^{ij}_{k_{i}2}&...&...&...&x^{ij}_{k_{i}k_{j}}\\
	\end{bmatrix},$$
	for $G=\mathbb{Z}_4 \times \mathbb{Z}_2$ and  $(i,j)= (5,6),(6,5),(7,8),(8,7)$, it is

$$X^{ij}_{d_ik_i \times d_jk_j}= 
	\begin{bmatrix}
		x^{ij}_{11}&x^{ij}_{12}&...&...&...&x^{ij}_{1k_{j}}\\
		x^{ij}_{21}&x^{ij}_{22}&...&...&...&x^{ij}_{2k_{j}}\\
		...&...&...&...&...&...\\
		...&...&..&...&...&...\\
		x^{ij}_{k_{i}1}&x^{ij}_{k_{i}2}&...&...&...&x^{ij}_{k_{i}k_{j}}\\
	\end{bmatrix},$$
	
	for  $(i,j)=(5,5)$,  with  $ G= D_4$ it is 
	$$X^{ii}_{d_ik_i}= 
	\begin{bmatrix}
		x^{ii}_{11}I_{2}&x^{ii}_{13}I_{2}&...&...&...&x^{ii}_{1(2k_{i}-1)}I_{2}\\
		x^{ii}_{31}I_{2}&x^{ii}_{33}I_{2}&...&...&...&x^{ii}_{3(2k_{i}-1)}I_{2}\\
		...&...&...&...&...&...\\
		...&...&...&...&...&...\\
		x^{ii}_{(2k_{i}-1)1}I_{2}&x^{ii}_{(2k_{i}-1)2}I_{2}&...&...&...&x^{ii}_{(2k_{i}-1)(2k_{i}-1)}I_{2}\\
	\end{bmatrix},
	$$
	$\&$ for  $(i,j)=(5,5)$, with $G=Q_8$ it is
	$$X^{ii}_{d_ik_i}= 
	\begin{bmatrix}
		x^{ii}_{12}I_{2}^{-}&x^{ii}_{14}I_{2}^{-}&...&...&...&x^{ii}_{1(2k_{i})}I_{2}^{-}\\
		x^{ii}_{32}I_{2}^{-}&x^{ii}_{34}I_{2}^{-}&...&...&...&x^{ii}_{3(2k_{i})}I_{2}^{-}\\
		...&...&...&...&...&...\\
		...&...&...&...&...&...\\
		x^{ii}_{(2k_{i}-1)2}I_{2}^{-}&x^{ii}_{(2k_{i}-1)4}I_{2}^{-}&...&...&...&x^{ii}_{(2k_{i}-1)(2k_{i})}I_{2}^{-}\\
	\end{bmatrix},
	$$
	where $I_{2}^{-}=\begin{bmatrix}
		0&1\\-1&0
	\end{bmatrix}$.
	\label{remark}
\end{Remark} 
\begin{Note}
	Note that $X\in \Xi_G'$ is invariant under $\rho$ if and only if for every $(i,j) \in A_G$, we have
	$X^{ij}_{d_{i}k_{i} \times d_{j}k_{j} }=  C_{k_{i}\rho_{i}(g)}^{t}X^{ij}_{d_{i}k_{i} \times d_{j}k_{j} }C_{k_{j}\rho_{j}(g)}$,  $\forall g \in G$ and $X=[ X^{ij}_{d_{i}k_{i} \times d_{j}k_{j}} ]_{(i,j) \in A_G}.$
\end{Note}
\begin{lemma}
	If $X \in \Xi_{G}'$, then $X^{ij}_{d_{i}k_{i} \times d_{j}k_{j}}$ is non-singular,  with $(i,j) \in A_G$ iff  $X$ is non-singular.
	\label{lemma3.0}
\end{lemma}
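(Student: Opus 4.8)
The plan is to exploit the block structure of $X \in \Xi_G'$ recorded in Theorem \ref{theorem1.2} and Remark \ref{remark}, according to which the $(i,j)$ block $X^{ij}_{d_ik_i \times d_jk_j}$ vanishes unless $(i,j) \in A_G$, i.e.\ unless $\rho_j$ is dual to $\rho_i$. First I would observe that passing to duals is an involution on the irreducible representations: since $(\rho_i^{\ast})^{\ast} \cong \rho_i$ and the dual of an irreducible is again irreducible of equal degree (so $d_i = d_{\sigma(i)}$), there is an involution $\sigma$ of $\{1,\dots,r\}$ with $A_G = \{(i,\sigma(i)) : 1 \le i \le r\}$. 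This splits the index set into self-dual indices, where $\sigma(i)=i$ and the diagonal block $X^{ii}$ is allowed, and genuine dual pairs $\{i,j\}$ with $i \neq j = \sigma(i)$; for such a pair $\sigma(i)=j\neq i$ forces $(i,i),(j,j) \notin A_G$, hence $X^{ii}=X^{jj}=0$, while $X^{ij}$ and $X^{ji}$ may be nonzero.

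Next I would reorder the chosen basis so that the coordinates of each dual pair $\{i,j\}$ and of each self-dual index are listed consecutively; this replaces $X$ by a congruent matrix $P^tXP$ with $P$ a permutation matrix, so that $\det(P^tXP)=\det(P)^2\det(X)=\det(X)$. Because row-block $i$ meets only column-block $\sigma(i)$, the reordered matrix is genuinely block-diagonal in \emph{super-blocks}: a single diagonal block $X^{ii}$ for each self-dual $i$, and a $\left[\begin{smallmatrix} 0 & X^{ij} \\ X^{ji} & 0\end{smallmatrix}\right]$ arrangement for each dual pair $\{i,j\}$. Consequently $\det(X)$ is exactly the product of the determinants of these super-blocks, so $X$ is non-singular if and only if every super-block is non-singular.

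It then remains to translate nonsingularity of each super-block into nonsingularity of the individual blocks. For a self-dual index this is immediate, $X^{ii}$ being square of size $d_ik_i$. For a dual pair I would note that the antidiagonal super-block has rank $\operatorname{rank}(X^{ij})+\operatorname{rank}(X^{ji})$: if $k_i=k_j$ (so both blocks are square of common size $d_ik_i=d_jk_j$) its determinant is $\pm\det(X^{ij})\det(X^{ji})$, nonzero exactly when both $X^{ij}$ and $X^{ji}$ are non-singular; if $k_i\neq k_j$ the blocks are rectangular and the rank is at most $2\min(d_ik_i,d_jk_j)<d_ik_i+d_jk_j$, so the super-block — and hence $X$ — is singular, while simultaneously at least one of $X^{ij},X^{ji}$ fails to be square and so is not non-singular. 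In every case the super-block is non-singular iff both of its constituent blocks are, which, assembled over all $(i,j)\in A_G$, yields the stated equivalence.

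The only genuinely delicate point is the bookkeeping in the last step: one must keep in mind that ``every block $X^{ij}$ with $(i,j)\in A_G$ is non-singular'' already incorporates both members $X^{ij}$ and $X^{ji}$ of each dual pair, and that the rectangular ($k_i\neq k_j$) case is consistently handled rather than vacuously mishandled. The involution structure of $A_G$ — which I would either justify by Schur's lemma or simply read off from the character tables $T_G$ of Section~2 — is what guarantees the clean super-block-diagonal form and is the conceptual heart of the argument; everything else reduces to the standard determinant of a block-diagonal matrix.
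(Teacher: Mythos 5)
Your proof is correct and follows essentially the same route as the paper's: exploit the fact that the nonzero blocks of $X$ sit at positions $(i,\sigma(i))$ for the duality involution $\sigma$, so that after a permutation $X$ is super-block-diagonal and its nonsingularity reduces to that of the individual blocks $X^{ij}_{d_ik_i\times d_jk_j}$. The paper's own proof is only a two-line sketch of this idea; your version is a complete elaboration, in particular treating the rectangular case $k_i\neq k_j$ and the converse direction, which the paper dismisses as ``easy to see.''
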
  

\begin{proof}  With reference to the above remark and note, for every $X \in \Xi_{G}'$, we have  $X^{ij}_{d_{i}k_{i} \times d_{j}k_{j} }=  C_{k_{i}\rho_{i}(g)}^{t}X^{ij}_{d_{i}k_{i} \times d_{j}k_{j} }C_{k_{j}\rho_{j}(g)}$,  $\forall g \in G$. Suppose $X^{ij}_{d_{i}k_{i} \times d_{j}k_{j}}$ is non-singular, then  $X^{ij}_{d_{i}k_{i} \times d_{j}k_{j}}$ is square sub-matix   for $(i,j) \in A_G$ and   rows (columns) of X is linearly linearly independent. Thus the result follows.  Converse part is easy to see.
\end{proof}
To prove  following lemmas, from remark  \ref{remark}  we will choose  only those $X \in \mathbb{M}_{n}(\mathbb{F})$ whose $(i,j)^{th} $ block is zero for $(i,j) \notin A_G$ and for $(i,j) \in A_G$,   the  $(i,j)^{th}$ block  $X^{ij}_{d_{i}k_{i} \times d_{j}k_{j}}$ is    non-singular. 

\begin{lemma} For $n \in \mathbb{Z}^{+}$, the number of n degree representations of group $D_4$ or $Q_8$, whose corresponding spaces of invariant bilinear forms contain non-degenerate bilinear froms is  $\sum_{s=0}^{[\frac{n}{2}]}\binom{n-2s+3}{3}$.
	\label{lemma3.1}
\end{lemma}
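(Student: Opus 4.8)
The plan is to show that for $G \in \{D_4, Q_8\}$ \emph{every} $n$-degree representation already admits a non-degenerate invariant bilinear form, so that the desired count coincides with the total number of $n$-degree representations given by Theorem \ref{theorem1.1}.

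First I would record the duality structure. Reading the tables $T_{D_4}$ and $T_{Q_8}$ with $\omega^8 = 1$ and $\omega^4 = -1$, the four linear characters take only the values $\pm 1$ and the degree-two character $\chi_5$ is real (its values are $0$). Hence every $\chi_i$ is invariant under inversion, so each $\rho_i$ is self-dual and $A_G = \{(1,1),(2,2),(3,3),(4,4),(5,5)\}$; in particular every pair in $A_G$ is diagonal and each block $X^{ii}_{d_i k_i}$ is square. Note that for the linear pieces $C_{k_i\rho_i(g)} = \pm I_{k_i}$, so the invariance relation is automatic and $X^{ii}$ ranges over all $k_i \times k_i$ matrices, consistent with Remark \ref{remark}.

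Next, by Lemma \ref{lemma3.0} a form $X \in \Xi_G'$ is non-degenerate iff every diagonal block $X^{ii}$ is non-singular; so it suffices to exhibit, for each tuple $(k_1, \ldots, k_5)$ with $k_1 + k_2 + k_3 + k_4 + 2 k_5 = n$, a choice of blocks making all of them invertible. For $i = 1,2,3,4$ I take $X^{ii} = I_{k_i}$. For $i = 5$, Remark \ref{remark} shows that $X^{55}$ has the Kronecker form $M \otimes I_2$ (for $D_4$) or $M \otimes I_2^-$ (for $Q_8$), where $M = (x^{55}_{(2a-1)(2b-1)})_{a,b=1}^{k_5}$ is an arbitrary $k_5 \times k_5$ scalar matrix. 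Since $I_2$ and $I_2^-$ are invertible, $X^{55}$ is non-singular exactly when $M$ is, and choosing $M = I_{k_5}$ finishes the construction. Thus $\Xi_G'$ contains a non-degenerate form for \emph{every} isomorphism class $(k_1, \ldots, k_5)$.

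Finally I carry out the count. By the previous step the representations whose invariant space contains a non-degenerate form are precisely all $n$-degree representations, i.e.\ all tuples $(k_1, \ldots, k_5) \in \mathbb{Z}_{\geq 0}^5$ with $k_1 + k_2 + k_3 + k_4 + 2 k_5 = n$. Fixing $k_5 = s$ forces $0 \leq s \leq [\frac{n}{2}]$ and leaves $k_1 + k_2 + k_3 + k_4 = n - 2s$, which has $\binom{n - 2s + 3}{3}$ non-negative solutions by stars and bars; summing over $s$ yields $\sum_{s=0}^{[\frac{n}{2}]} \binom{n-2s+3}{3}$, as claimed. I expect the only real obstacle to be the degree-two block: one must read off the Kronecker structure of $X^{55}$ from Remark \ref{remark} and observe that the constrained shape nevertheless permits a non-singular representative precisely because $I_2$ and $I_2^-$ are invertible. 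The self-duality bookkeeping and the combinatorial count are then routine.
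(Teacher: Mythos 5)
Your proof is correct and follows essentially the same route as the paper's: exhibit, for every tuple $(k_1,\dots,k_5)$ with $k_1+k_2+k_3+k_4+2k_5=n$, a block-diagonal invariant form with non-singular diagonal blocks, and then count the tuples. You in fact supply the two details the paper leaves implicit, namely why the degree-two block still admits a non-singular representative (the Kronecker shapes $M\otimes I_2$ and $M\otimes I_2^{-}$ from Remark \ref{remark}) and the stars-and-bars evaluation of the sum, so nothing is missing.
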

\begin{proof}
	From (\ref{splitn}) we have $ k_{1} + k_{2} +k_{3} +k_{4} +2k_{5}= n$ and we have to choose    $X \in \mathbb{M}_{n}(\mathbb{F})$ such that  $X= Diag\big[X^{11}_{k_{1}}, X^{22}_{k_{2}}, X^{33}_{k_{3}},X^{44}_{k_{4}}, X^{55}_{2k_{5}} \big]$. For $1 \leq i \leq 5$, the chosen sub-matrices $X^{ii}_{d_ik_i}$  is  non-singular. Thus  $X= Diag\bigg[X^{11}_{k_{1}},$ $ X^{22}_{k_{2}}, X^{33}_{k_{3}},X^{44}_{k_{4}},X^{55}_{2k_{5}} \bigg]$ is non-singular and $ X = C_{\rho(g)}^{t}XC_{\rho(g)}$, $\forall g \in D_{4}$ or $Q_8$ with $$ k_{1} +k_2 +k_{3}  +k_{4}+ 2k_{5}= n.
	$$

	\noindent  We have a collection of distinct   5-tuples ($k_{1}, k_{2}, k_{3},k_{4}, k_{5}$) of size  $\sum_{s=0}^{[\frac{n}{2}]}\binom{n-2s+3}{3}$,
	which is same as the number of representations of degree $n$, whose corresponding spaces contain non-degenerate bilinear forms.
\end{proof}
\begin{lemma}
	For every $n \in \mathbb{N}$, the number of n degree representations of group $ \mathbb{Z}_{8}$, whose corresponding spaces of invariant bilinear forms contain  non-degenerate  bilinear froms is $\sum_{s=0}^{[\frac{n}{2}]}\binom{s+2}{2}\binom{n-2s+1}{1}$.
	\label{lemma3.3}
\end{lemma}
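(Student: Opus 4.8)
The plan is to mirror the argument used in Lemma \ref{lemma3.1}, adapting it to the dual structure of $\mathbb{Z}_8$. First I would record that $\mathbb{Z}_8$ is abelian of order $8$, so all eight irreducible representations $\rho_1,\dots,\rho_8$ have degree $1$; hence equation (\ref{splitn}) reduces to $k_1+k_2+\cdots+k_8=n$, and an $n$ degree representation is determined up to isomorphism by the $8$-tuple $(k_1,\dots,k_8)$ of non-negative integers satisfying this relation. Reading off the character table $T_{\mathbb{Z}_8}$ and inverting each character (a one dimensional $a\mapsto\omega^m$ has dual $a\mapsto\omega^{-m}$), I would identify the dual pairs, obtaining $A_{\mathbb{Z}_8}=\{(1,1),(2,2),(3,4),(4,3),(5,6),(6,5),(7,8),(8,7)\}$, which agrees with Remark \ref{remark}.

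Next I would characterize exactly which tuples give a space $\Xi_G'$ containing a non-degenerate form. By Lemma \ref{lemma3.0}, an $X\in\Xi_G'$ is non-singular iff every block $X^{ij}_{d_ik_i\times d_jk_j}$ with $(i,j)\in A_{\mathbb{Z}_8}$ is non-singular. Since $d_i=1$ throughout, each such block is an arbitrary $k_i\times k_j$ matrix (Remark \ref{remark}), so it can be made non-singular precisely when it is square, i.e. when $k_i=k_j$; when it is square the identity matrix furnishes such a choice. The blocks $(1,1)$ and $(2,2)$ are automatically square and impose no condition, while the pairs $(3,4),(5,6),(7,8)$ force $k_3=k_4$, $k_5=k_6$ and $k_7=k_8$. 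Thus the representations whose invariant space contains a non-degenerate form are exactly those whose tuple satisfies these three equalities (this is Theorem \ref{theorem1.3}\textbf{C}).

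Finally I would count these tuples. Writing $k_3=k_4=p$, $k_5=k_6=q$, $k_7=k_8=r$ with $p,q,r\ge 0$, the constraint becomes $k_1+k_2+2(p+q+r)=n$. Grouping the solutions by $s=p+q+r$, a double application of stars and bars gives $\binom{s+2}{2}$ choices for $(p,q,r)$ and $\binom{n-2s+1}{1}=n-2s+1$ choices for $(k_1,k_2)$, valid for $0\le s\le[\frac{n}{2}]$ so that $n-2s\ge 0$. Summing over $s$ yields $\sum_{s=0}^{[\frac{n}{2}]}\binom{s+2}{2}\binom{n-2s+1}{1}$, the claimed count.

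The routine parts are the two stars-and-bars computations; the only point needing care is the step that non-singularity of a $k_i\times k_j$ block forces squareness — that is, that the equality constraints $k_3=k_4$, $k_5=k_6$, $k_7=k_8$ are both necessary and sufficient — which is where Lemma \ref{lemma3.0} together with the explicit block form from Remark \ref{remark} does the real work. Everything else is a direct reduction of the constrained partition problem to a double sum.
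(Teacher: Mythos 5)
Your proposal is correct and follows essentially the same route as the paper: decompose $X$ into blocks indexed by the dual pairs $(1,1),(2,2),(3,4),(4,3),(5,6),(6,5),(7,8),(8,7)$, observe via Lemma \ref{lemma3.0} and Remark \ref{remark} that non-degeneracy forces the off-diagonal blocks to be square (hence $k_3=k_4$, $k_5=k_6$, $k_7=k_8$) and suffices when they are, and then count solutions of $k_1+k_2+2k_3+2k_5+2k_7=n$ by grouping on $s=k_3+k_5+k_7$. If anything, you spell out the necessity direction and the stars-and-bars computation more explicitly than the paper does.
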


\begin{proof} Let  $G = \mathbb{Z}_{8}$. Then $ k_{1} + k_{2} +k_{3} +k_{4} +k_{5}+k_{6} +k_{7} +k_{8}= n $ and  we have to choose    $X \in \mathbb{M}_{n}(\mathbb{F})$ such that.
	$$X= Diag\bigg[X^{11}_{k_1 \times k_{1}}  ,X^{22}_{k_{2}\times k_{2}}, \begin{bmatrix}
		O&X^{34}_{k_{3} \times k_4} \\
		X^{43}_{k_{4}\times k_3}&O
	\end{bmatrix}, \begin{bmatrix}
		O&X^{56}_{k_{5}\times k_6} \\
		X^{65}_{k_{6}\times k_5}&O
	\end{bmatrix} , \begin{bmatrix}
		O&X^{78}_{k_{7}\times k_8} \\
		X^{87}_{k_{8}\times k_7}&O
	\end{bmatrix} \bigg]$$
	
	with   $k_3 =k_4 \, \, , k_5= k_6$ $\&$ $ k_7=k_8$. Since the chosen sub-matrices $X^{ij}_{d_ik_i \times d_jk_j}$  is  non-singular for $(i,j) \in \{(1,1),(2,2),(3,4),(4,3),(5,6),(6,5),(7,8),(8,7) \}$.  The rows or columns of  $X$ are linearly independent thus $X$ is non-singular and $ X = C_{\rho(g)}^{t}XC_{\rho(g)}$, $\forall g \in \mathbb{Z}_8$ with  
	$$ k_{1} +k_2+2k_3+2k_5+2k_7= n.$$ 
	As the number of  such  5-tuples ($k_{1}, k_{2}, k_{3},k_{5}, k_{7}$) of size  $\sum_{s=0}^{[\frac{n}{2}]}\binom{s+2}{2}\binom{n-2s+1}{1}$, the number of representations of degree $n$, whose corresponding spaces contain non-degenerate  bilinear forms is $\sum_{s=0}^{[\frac{n}{2}]}\binom{s+2}{2}\binom{n-2s+1}{1}$.
\end{proof}
\begin{lemma}
	For every $n \in \mathbb{N}$, the  number of n degree representations of  the group $ \mathbb{Z}_{4} \times \mathbb{Z}_{2} $, whose corresponding spaces of invariant bilinear forms contain non-degenerate bilinear froms is $\sum_{s=0}^{[\frac{n}{2}]}\binom{s+1}{1}\binom{n-2s+3}{3}$.
	\label{lemma3.4}
\end{lemma}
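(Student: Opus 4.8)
The plan is to follow the template of Lemma \ref{lemma3.3}, adapted to the pairing structure of $\mathbb{Z}_4 \times \mathbb{Z}_2$. First I would read off from the table $T_{\mathbb{Z}_4 \times \mathbb{Z}_2}$ which irreducibles are dual to one another. Since $\mathbb{Z}_4 \times \mathbb{Z}_2$ is abelian all eight irreducibles are one-dimensional, so $\rho_i$ is dual to $\rho_j$ exactly when $\rho_j(g) = \rho_i(g)^{-1}$ on the generators $a,b$. Recalling $\omega^8 = 1$, $\omega^4 = -1$, and $\omega^6 = \omega^{-2}$, one checks that $\rho_1, \rho_2, \rho_3, \rho_4$ are each self-dual, while $\rho_5 \leftrightarrow \rho_6$ and $\rho_7 \leftrightarrow \rho_8$ form dual pairs. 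Hence $A_G = \{(1,1),(2,2),(3,3),(4,4),(5,6),(6,5),(7,8),(8,7)\}$, and by Remark \ref{remark} a matrix $X \in \Xi_G'$ has the block-diagonal shape
$$X = Diag\left[ X^{11}_{k_1}, X^{22}_{k_2}, X^{33}_{k_3}, X^{44}_{k_4}, \begin{bmatrix} O & X^{56}_{k_5 \times k_6} \\ X^{65}_{k_6 \times k_5} & O \end{bmatrix}, \begin{bmatrix} O & X^{78}_{k_7 \times k_8} \\ X^{87}_{k_8 \times k_7} & O \end{bmatrix} \right].$$

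Next I would apply the non-singularity criterion of Lemma \ref{lemma3.0}: a non-degenerate invariant form exists iff every block indexed by $A_G$ can be chosen non-singular. The four self-dual blocks $X^{11}_{k_1}, \ldots, X^{44}_{k_4}$ are square of orders $k_1, \ldots, k_4$ and may always be taken non-singular. The dual-pair blocks $X^{56}_{k_5 \times k_6}$ and $X^{78}_{k_7 \times k_8}$ are rectangular of sizes $k_5 \times k_6$ and $k_7 \times k_8$; such a block can be non-singular only when it is square, forcing $k_5 = k_6$ and $k_7 = k_8$ (precisely the condition recorded in part \textbf{D} of Theorem \ref{theorem1.3}). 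Substituting these equalities into the degree relation (\ref{splitn}), which here reads $k_1 + \cdots + k_8 = n$, reduces the problem to counting non-negative integer solutions of
$$k_1 + k_2 + k_3 + k_4 + 2k_5 + 2k_7 = n.$$

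Finally I would carry out the count by conditioning on the value $s = k_5 + k_7$ contributed by the paired indices. For each fixed $s$ with $0 \leq s \leq [\frac{n}{2}]$, the equation $k_5 + k_7 = s$ has $\binom{s+1}{1}$ solutions (stars and bars in two variables), while the residual equation $k_1 + k_2 + k_3 + k_4 = n - 2s$ has $\binom{n-2s+3}{3}$ solutions (stars and bars in four variables); since these two choices are independent, the contribution for each $s$ is their product. Summing over the admissible range yields
$$\sum_{s=0}^{[\frac{n}{2}]}\binom{s+1}{1}\binom{n-2s+3}{3},$$
as claimed. The only delicate point is the bookkeeping in this last step, namely isolating $s$ as the single parameter coupling the two stars-and-bars factors and pinning down the summation range $0 \le s \le [\frac{n}{2}]$; everything preceding it is a direct transcription of the character-table data into the block description of Remark \ref{remark}, so the argument remains entirely elementary.
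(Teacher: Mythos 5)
Your proposal is correct and follows essentially the same route as the paper: identify the dual pairs $\rho_5\leftrightarrow\rho_6$, $\rho_7\leftrightarrow\rho_8$ from $T_{\mathbb{Z}_4\times\mathbb{Z}_2}$, note via Lemma \ref{lemma3.0} that the rectangular off-diagonal blocks force $k_5=k_6$ and $k_7=k_8$, and count solutions of $k_1+k_2+k_3+k_4+2k_5+2k_7=n$. Your write-up is in fact more explicit than the paper's (which merely asserts the count of $6$-tuples), particularly in conditioning on $s=k_5+k_7$ to produce the product of stars-and-bars factors.
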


\begin{proof}
	Let  $G = \mathbb{Z}_{4} \times \mathbb{Z}_{2} $. Then $ k_{1} + k_{2} +k_{3} +k_{4} +k_{5}+k_{6} +k_{7} +k_{8}= n $  and  we have to choose    $X \in \mathbb{M}_{n}(\mathbb{F})$ such that  	$$X= Diag\bigg[X^{11}_{k_1 \times k_{1}}  ,X^{22}_{k_{2}\times k_{2}}, ,X^{33}_{k_{3}\times k_{3}} ,X^{44}_{k_{4}\times k_{4}}, \begin{bmatrix}
		O&X^{56}_{k_{5}\times k_6} \\
		X^{65}_{k_{6}\times k_5}&O
	\end{bmatrix} , \begin{bmatrix}
		O&X^{78}_{k_{7}\times k_8} \\
		X^{87}_{k_{8}\times k_7}&O
	\end{bmatrix} \bigg]$$
	
	with   $ k_5= k_6$ $\&$ $ k_7=k_8$. Since the chosen sub-matrices $X^{ij}_{d_ik_i \times d_jk_j}$  is  non-singular for $(i,j) \in \{(1,1),(2,2),(3,3),$ $(4,4),(5,6),(6,5),(7,8),(8,7) \}$.  The rows or columns of  $X$ are linearly independent thus $X$ is non-singular and $ X = C_{\rho(g)}^{t}XC_{\rho(g)}$, $\forall g \in \mathbb{Z}_4 \times \mathbb{Z}_2$ with  
	$$ k_{1} +k_2+k_3+k_4+2k_5+2k_7= n.$$
	As the number of  such  6-tuples ($k_{1}, k_{2}, k_{3},k_4, k_{5}, k_{7}$) of size  $\sum_{s=0}^{[\frac{n}{2}]}\binom{s+1}{1}\binom{n-2s+3}{3}$,  the number of representations of degree $n$, whose corresponding spaces contain non-degenerate  bilinear forms is $\sum_{s=0}^{[\frac{n}{2}]}\binom{s+1}{1}\binom{n-2s+3}{3}$.
\end{proof} 
\begin{lemma}
	For every $n \in \mathbb{N}$, the number of n degree representations of a group $ \mathbb{Z}_{2} \times \mathbb{Z}_{2} \times \mathbb{Z}_{2} $, whose corresponding spaces of invariant bilinear forms contain  non-degenerate  bilinear forms is $\binom{n+7}{7}$.
	\label{lemma3.5}
\end{lemma}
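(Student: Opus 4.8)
The plan is to exploit the fact that $\mathbb{Z}_2 \times \mathbb{Z}_2 \times \mathbb{Z}_2$ is abelian of order $8$, so by the discussion following Maschke's theorem it has exactly $8$ irreducible representations, each of degree $1$. First I would read off from the table $T_{\mathbb{Z}_2 \times \mathbb{Z}_2 \times \mathbb{Z}_2}$ that every entry is either $\omega^8 = 1$ or $\omega^4 = -1$; in particular every character $\chi_i$ is real-valued. Since the dual of $\rho_i$ has character $\overline{\chi_i} = \chi_i$, and distinct irreducibles have distinct characters by Theorem \ref{theorem2.2}, each $\rho_i$ is self-dual. Hence $A_G = \{(i,i) : 1 \leq i \leq 8\}$.

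With $A_G$ so identified, Theorem \ref{theorem1.2} shows that $\Xi_G'$ consists precisely of the block-diagonal matrices $X = \mathrm{Diag}[X^{11}_{k_1}, \ldots, X^{88}_{k_8}]$. Because every $d_i = 1$, we have $C_{k_i\rho_i(g)} = \chi_i(g)\,I_{k_i}$ with $\chi_i(g) = \pm 1$, so the defining relation $X^{ii}_{k_i} = C_{k_i\rho_i(g)}^t X^{ii}_{k_i} C_{k_i\rho_i(g)}$ collapses to $X^{ii}_{k_i} = (\pm 1)^2 X^{ii}_{k_i}$ and imposes no constraint: each $X^{ii}_{k_i}$ is an arbitrary $k_i \times k_i$ matrix, consistent with Remark \ref{remark}. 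I would then apply Lemma \ref{lemma3.0}: $X$ is non-singular iff every diagonal block is non-singular. Since a free $k_i \times k_i$ block may always be taken to be $I_{k_i}$, every such $X$ can be made non-singular, so every $n$ degree representation $\rho = \bigoplus_{i=1}^{8} k_i \rho_i$ admits a non-degenerate invariant bilinear form.

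It then remains only to count. By (\ref{splitn}) with all $d_i = 1$, the degree condition is $k_1 + k_2 + \cdots + k_8 = n$ with each $k_i \in \mathbb{Z}_{\geq 0}$, and by Theorem \ref{theorem2.2} distinct $8$-tuples $(k_1,\ldots,k_8)$ yield non-isomorphic representations. A stars-and-bars count of the non-negative integer solutions gives $\binom{n+8-1}{8-1} = \binom{n+7}{7}$, the claimed value, which moreover coincides with the total count of $n$ degree representations in Theorem \ref{theorem1.1}, confirming that \emph{every} representation of this group qualifies.

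Unlike the $\mathbb{Z}_8$ and $\mathbb{Z}_4 \times \mathbb{Z}_2$ cases, where some irreducible is genuinely dual to a \emph{different} irreducible and thereby forces parity constraints $k_i = k_j$, no such obstruction arises here. The only real point requiring care — and the step I would check most carefully — is the self-duality of every irreducible, since that is exactly what collapses $A_G$ to the diagonal and removes all constraints; once it is established, both the non-degeneracy and the enumeration follow immediately.
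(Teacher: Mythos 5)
Your proposal is correct and follows essentially the same route as the paper: identify $A_G$ as the diagonal, observe that the invariance condition places no constraint on the diagonal blocks $X^{ii}_{k_i}$, pick each block non-singular, and count the $8$-tuples $(k_1,\ldots,k_8)$ with $k_1+\cdots+k_8=n$ by stars and bars to get $\binom{n+7}{7}$. Your write-up is in fact more explicit than the paper's (which leaves the self-duality of every $\rho_i$ and the collapse of the relation $X^{ii}=\chi_i(g)^2X^{ii}$ implicit in Remark \ref{remark}), but the underlying argument is the same.
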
 

\begin{proof}
	From the equation  (\ref{splitn}), we have  $ k_{1} + k_{2} +k_{3} +k_{4} +k_{5}+k_{6} +k_{7} +k_{8}= n $    and  we have to  choose    $X \in \mathbb{M}_{n}(\mathbb{F})$ such that  $X= Diag\bigg[X^{11}_{k_{1}}, X^{22}_{k_{2}}, X^{33}_{k_{3}},X^{44}_{k_{4}}, X^{55}_{k_{5}}, X^{66}_{k_{6}},$ $X^{77}_{k_{7}}, X^{88}_{k_{8}} \bigg]$. For  $1\leq i \leq 8$, the  chosen sub-matrices $X^{ii}_{d_ik_i \times d_ik_i}$  is  non-singular.   The rows or columns of  $X$ are linearly independent thus $X$ is non-singular and $ X = C_{\rho(g)}^{t}XC_{\rho(g)}$, $\forall g \in \mathbb{Z}_{2} \times \mathbb{Z}_{2} \times \mathbb{Z}_{2}$ with  $$ k_{1}  +k_{2}  +k_{3}+k_4+k_{5}  +k_{6}  +k_{7}+k_8= n.
	$$ 
	As the number of  such  8-tuples  $(k_1 , k_2, k_3, k_4,k_5,k_6,k_7,k_8)$  is $\binom{n+7}{7}$,  the number of representations of degree $n$, whose corresponding spaces contain non-degenerate  bilinear forms is $\binom{n+7}{7}$.	
\end{proof} 
\begin{Remark} Since $\mathbb{F}$ is algebraically closed, it has infinitely many non zero elements, hence  if there is one non-degenerate  invariant bilinear form in the space $\Xi_{G}$, it has   infinitely many.\\
\end{Remark}
Thus from Lemmas  $\ref{lemma3.1}$ to $\ref{lemma3.5}$, we find that the number of $n$ degree representations of a group $G$ of order 8, whose corresponding  spaces of invariant bilinear forms   contain non-degenerate forms  are $\sum_{s=0}^{[\frac{n}{2}]}\binom{n-2s+3}{3}$, $\sum_{s=0}^{[\frac{n}{2}]}\binom{n-2s+3}{3}$, $\sum_{s=0}^{[\frac{n}{2}]}\binom{s+1}{1}\binom{n-2s+3}{3}$, $\sum_{s=0}^{[\frac{n}{2}]}\binom{s+2}{2}\binom{n-2s+1}{1}$ and $\binom{n+7}{7}$ corresponding to  the  group  $D_{4}$,  $Q_{8}$,  $\mathbb{Z}_{2} \times \mathbb{Z}_{4}$, $\mathbb{Z}_8$ and $\mathbb{Z}_{2} \times \mathbb{Z}_{2} \times \mathbb{Z}_{2}$  respectively. 
\\
\subsection{Characterization of invariant bilinear forms under an n degree representation of a group of order 8}
\begin{lemma}
	For $G=D_4$ and $\rho$= $\oplus_{i=1}^{5} k_{i}\rho_{i}$ an $n$ degree representation of $G$,  $\rho$  always has a non-degenerate bilinear form.
	\label{lemma3.6}
\end{lemma}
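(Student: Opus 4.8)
The plan is to reduce the statement to an explicit construction. By Lemma \ref{lemma3.0}, an element $X \in \Xi_G'$ is non-singular precisely when each of its diagonal blocks $X^{ij}_{d_ik_i \times d_jk_j}$ with $(i,j) \in A_G$ is non-singular, and a non-singular invariant form is exactly a non-degenerate one. So the whole task becomes: for an arbitrary tuple $(k_1,\dots,k_5)$ with $k_1+k_2+k_3+k_4+2k_5=n$, exhibit one invariant $X$ whose relevant blocks are all non-singular.

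First I would pin down $A_G$. The linear characters $\rho_1,\dots,\rho_4$ take values in $\{\omega^8,\omega^4\}=\{1,-1\}\subset\mathbb{R}$, so $\rho_i(g)^{-1}=\rho_i(g)$ and each is self-dual; and from $T_{D_4}$ one reads $\rho_5(a)=\begin{bmatrix}0&-1\\1&0\end{bmatrix}$, $\rho_5(b)=\begin{bmatrix}1&0\\0&-1\end{bmatrix}$, both orthogonal, so $\rho_5(g)^t\rho_5(g)=I_2$ for all $g$ and $\rho_5$ is self-dual as well. Hence $A_G=\{(i,i):1\le i\le 5\}$, and by Theorem \ref{theorem1.2} we have $\Xi_G'=\bigoplus_{i=1}^{5}\mathbb{W}_{(i,i)}$ with no off-diagonal coupling.

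Next I would take the candidate $X=\mathrm{Diag}\big[I_{k_1},I_{k_2},I_{k_3},I_{k_4},I_{2k_5}\big]$ and check invariance block by block. For $1\le i\le 4$ the condition $X^{ii}=C_{k_i\rho_i(g)}^t X^{ii} C_{k_i\rho_i(g)}$ reduces to $X^{ii}=\rho_i(g)^2 X^{ii}$, which holds because $\rho_i(g)^2=1$; for $i=5$ it reduces to $I_{2k_5}=C_{k_5\rho_5(g)}^t C_{k_5\rho_5(g)}$, which holds since the block-diagonal $C_{k_5\rho_5(g)}$ is orthogonal by the computation above. Thus $X\in\Xi_G'$, every diagonal block is an identity matrix and so non-singular, and by Lemma \ref{lemma3.0} the form $X$ is non-degenerate; since $(k_1,\dots,k_5)$ was arbitrary, every $n$-degree representation of $D_4$ qualifies.

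I expect essentially no serious obstacle here; the only point needing a line of care is confirming that $I_{2k_5}$ genuinely sits inside the constrained block $\mathbb{W}_{(5,5)}$, whose members have the scalar-times-$I_2$ pattern of Remark \ref{remark}. This is immediate: $I_{2k_5}=I_{k_5}\otimes I_2$ corresponds to the choice $x^{55}_{(2s-1)(2s-1)}=1$ with all other entries zero. As a cross-check I would note that the count of admissible tuples produced in Lemma \ref{lemma3.1} already coincides with the total number of $n$-degree representations of $D_4$ recorded in Theorem \ref{theorem1.1}, which re-confirms that the non-degenerate locus is the whole set of isomorphism classes.
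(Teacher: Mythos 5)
Your proposal is correct and follows essentially the same route as the paper: the paper's proof of this lemma simply defers to Lemma \ref{lemma3.1}, which likewise builds a block-diagonal invariant $X$ with non-singular diagonal blocks and invokes non-singularity of the whole matrix. The only difference is that you make the construction fully explicit (taking each block to be an identity matrix and verifying invariance against the generators), which fills in details the paper leaves implicit.
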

\begin{proof} Follows from the proof of Lemma $\ref{lemma3.1}$.
\end{proof}
\begin{lemma}
	For $G=Q_8$ and $\rho$= $\oplus_{i=1}^{5} k_{i}\rho_{i}$  an n degree representation of $G$,  $\rho$ always has a non-degenerate bilinear form.
	\label{lemma3.7}
\end{lemma}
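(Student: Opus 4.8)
The plan is to reduce the statement to the block structure established in Theorem~\ref{theorem1.2} and Remark~\ref{remark}, and then to exhibit one explicit non-singular invariant matrix. The first step is to observe that every irreducible representation of $Q_8$ is self-dual. Reading off $T_{Q_8}$, the four linear characters take only the values $\omega^8=1$ and $\omega^4=-1$, hence are real-valued, so each $\rho_i$ with $1\le i\le 4$ is isomorphic to its dual. For $\rho_5$ one computes $\chi_5(a)=\omega^2+\omega^6=0$, $\chi_5(b)=0$ and $\chi_5(a^2)=-2$, so $\chi_5$ is real as well; thus $\rho_5$ is self-dual. Consequently $A_{Q_8}=\{(1,1),(2,2),(3,3),(4,4),(5,5)\}$ and, by Theorem~\ref{theorem1.2}, every $X\in\Xi_{Q_8}'$ is block-diagonal of the form $X=\mathrm{Diag}\big[X^{11}_{k_1},X^{22}_{k_2},X^{33}_{k_3},X^{44}_{k_4},X^{55}_{2k_5}\big]$.

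Next I would choose each diagonal block to be non-singular. For $1\le i\le 4$ the block $\rho_i$ is one-dimensional with $C_{\rho_i(g)}=\pm 1$, so the invariance condition $X^{ii}=C_{k_i\rho_i(g)}^{t}X^{ii}C_{k_i\rho_i(g)}$ imposes no restriction and $X^{ii}$ may be taken to be the $k_i\times k_i$ identity. For $i=5$, Remark~\ref{remark} tells us that the invariant block is forced into the shape $X^{55}=M\otimes I_2^{-}$ for an arbitrary $k_5\times k_5$ matrix $M$, where $I_2^{-}$ is the skew matrix defined there.

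The one genuinely new point, compared with the $D_4$ case of Lemma~\ref{lemma3.6}, is the non-singularity of this $\rho_5$-block, since for $Q_8$ the factor $I_2^{-}$ is skew rather than the symmetric $I_2$ appearing for $D_4$. This causes no difficulty: $\det(I_2^{-})=1\ne 0$, so by the Kronecker determinant identity $\det(M\otimes I_2^{-})=(\det M)^2(\det I_2^{-})^{k_5}=(\det M)^2$, and taking $M$ to be the identity makes $X^{55}$ non-singular. Assembling the blocks gives a non-singular $X\in\Xi_{Q_8}'$ satisfying $X=C_{\rho(g)}^{t}XC_{\rho(g)}$ for all $g\in Q_8$; by Lemma~\ref{lemma3.0} this $X$ represents a non-degenerate invariant bilinear form, establishing the claim. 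I expect no real obstacle here: this is precisely the simultaneous construction already performed for $D_4$ and $Q_8$ in the proof of Lemma~\ref{lemma3.1}, so the result in fact follows at once from that proof.
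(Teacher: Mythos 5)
Your proposal is correct and follows essentially the same route as the paper, whose proof of this lemma is simply a citation of the construction in Lemma \ref{lemma3.1}: build a block-diagonal non-singular invariant $X$ with identity blocks for the four linear factors and the forced $M\otimes I_2^{-}$ shape for the $\rho_5$ block. You merely make explicit the details the paper leaves implicit (self-duality of all irreducibles of $Q_8$ and the determinant computation $\det(M\otimes I_2^{-})=(\det M)^2$), which is a welcome but not substantively different elaboration.
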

\begin{proof} Follows from the proof of Lemma $\ref{lemma3.1}$.
\end{proof}
\begin{lemma}
	For $G=\mathbb{Z}_8$ and $\rho$= $\oplus_{i=1}^{8} k_{i}\rho_{i}$ an n degree representation of $G$, $\rho$ has a non-degenerate bilinear form iff $k_3  =k_4,  k_5=k_6 \, \& \, k_7=k_8.$
	\label{lemma3.8}
\end{lemma}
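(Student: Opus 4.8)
The plan is to read off the dual pairing among the irreducible representations of $\mathbb{Z}_8$ from the table $T_{\mathbb{Z}_8}$, feed it into the block decomposition of Theorem \ref{theorem1.2}, and then apply Lemma \ref{lemma3.0} to convert non-degeneracy of a form into non-singularity of each block indexed by $A_G$. First I would pin down $A_G$. Writing $\rho_i(a)=\omega^{m_i}$, the dual $\rho_i^{*}$ sends $a\mapsto \omega^{-m_i}$; from $T_{\mathbb{Z}_8}$ the exponents are $m=0,4,1,7,2,6,3,5$, so $\rho_1,\rho_2$ are self-dual while $\rho_3\leftrightarrow\rho_4$, $\rho_5\leftrightarrow\rho_6$ and $\rho_7\leftrightarrow\rho_8$ form dual pairs (as $\omega^{-1}=\omega^{7}$, $\omega^{-2}=\omega^{6}$, $\omega^{-3}=\omega^{5}$). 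Hence $A_G=\{(1,1),(2,2),(3,4),(4,3),(5,6),(6,5),(7,8),(8,7)\}$, and by Theorem \ref{theorem1.2} every $X\in\Xi_G'$ has exactly the block-diagonal shape displayed in the proof of Lemma \ref{lemma3.3}, the only possibly nonzero blocks being $X^{11},X^{22},X^{34},X^{43},X^{56},X^{65},X^{78},X^{87}$.

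Next I would invoke Lemma \ref{lemma3.0}: a form $X\in\Xi_G'$ is non-degenerate exactly when each block $X^{ij}$ with $(i,j)\in A_G$ is non-singular. Since a non-singular matrix must be square, this forces the rectangular blocks $X^{34}$ (size $k_3\times k_4$), $X^{56}$ (size $k_5\times k_6$) and $X^{78}$ (size $k_7\times k_8$) to be square, giving the necessity of $k_3=k_4$, $k_5=k_6$, $k_7=k_8$. To make the necessity airtight I would give the rank bound directly on the relevant super-block: a matrix of the form $\bigl[\begin{smallmatrix} O & B \\ C & O\end{smallmatrix}\bigr]$ with $B\in\mathbb{F}^{p\times q}$ and $C\in\mathbb{F}^{q\times p}$ has image equal to $\operatorname{im}(B)\oplus\operatorname{im}(C)$, so its rank is $\operatorname{rank}(B)+\operatorname{rank}(C)\le 2\min(p,q)$, which is strictly less than $p+q$ whenever $p\neq q$. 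Thus if, say, $k_3\neq k_4$, the $(3,4)$-$(4,3)$ super-block is singular, whence $X$ is singular for every $X\in\Xi_G'$ and no non-degenerate invariant bilinear form can exist.

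For sufficiency, assuming $k_3=k_4$, $k_5=k_6$, $k_7=k_8$, I would exhibit an explicit non-degenerate form, exactly as in the proof of Lemma \ref{lemma3.3}: take $X^{11}=I_{k_1}$, $X^{22}=I_{k_2}$, and for each dual pair set $X^{34}=X^{43}=I_{k_3}$, $X^{56}=X^{65}=I_{k_5}$, $X^{78}=X^{87}=I_{k_7}$. The invariance constraint $X^{ij}=C_{k_i\rho_i(g)}^{t}X^{ij}C_{k_j\rho_j(g)}$ is automatically satisfied on each pair because $\omega^{m_i+m_j}=\omega^{0}=1$ for dual $(i,j)$, and every diagonal super-block is manifestly invertible, so $X$ is non-singular by Lemma \ref{lemma3.0}. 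The only step requiring genuine care is the rank bound for the rectangular off-diagonal blocks; the remainder is bookkeeping with $T_{\mathbb{Z}_8}$ and the block structure already established in Theorem \ref{theorem1.2} and Lemma \ref{lemma3.3}.
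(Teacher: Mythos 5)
Your proposal is correct and follows the same route as the paper: the paper's own proof of this lemma is simply a pointer to Lemma \ref{lemma3.3}, which uses exactly your block decomposition over $A_G=\{(1,1),(2,2),(3,4),(4,3),(5,6),(6,5),(7,8),(8,7)\}$ and the same identity-block construction for sufficiency. The one place you go beyond the paper is the necessity direction: Lemma \ref{lemma3.3} only exhibits a non-singular invariant $X$ \emph{assuming} $k_3=k_4$, $k_5=k_6$, $k_7=k_8$, and leaves the converse implicit, whereas your rank bound $\operatorname{rank}\bigl[\begin{smallmatrix} O & B \\ C & O\end{smallmatrix}\bigr]=\operatorname{rank}(B)+\operatorname{rank}(C)\le 2\min(p,q)<p+q$ for $p\neq q$ makes the "only if" half airtight; that addition is worth keeping.
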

\begin{proof} Follows from the proof of  Lemma $\ref{lemma3.3}$.
\end{proof}
\begin{lemma}
	For $G=\mathbb{Z}_4 \times \mathbb{Z}_2 $ and $\rho$= $\oplus_{i=1}^{8} k_{i}\rho_{i}$  an n degree representation of $G$,  $\rho$ has a non-degenerate bilinear form iff $k_5=k_6 \, \& \, k_7=k_8.$
	\label{lemma3.9}
\end{lemma}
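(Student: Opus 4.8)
The plan is to reduce the existence of a non-degenerate invariant bilinear form to a purely combinatorial condition on the multiplicities $k_i$, using the block decomposition of $\Xi_G'$ from Theorem \ref{theorem1.2} together with the non-singularity criterion of Lemma \ref{lemma3.0}. First I would read off the dual pairs directly from the table $T_{\mathbb{Z}_4 \times \mathbb{Z}_2}$. Since every irreducible representation here is one-dimensional, the dual of $\rho_i$ is the representation whose character is $\chi_i^{-1}$. Inverting each column shows that $\rho_1,\rho_2,\rho_3,\rho_4$ are self-dual, their characters taking only the self-inverse values $\omega^8$ and $\omega^4$, while $a \mapsto \omega^2$ inverts to $a \mapsto \omega^6$, giving the two non-self-dual pairs $\rho_5 \leftrightarrow \rho_6$ and $\rho_7 \leftrightarrow \rho_8$. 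Hence $A_G = \{(1,1),(2,2),(3,3),(4,4),(5,6),(6,5),(7,8),(8,7)\}$, exactly as in the proof of Lemma \ref{lemma3.4}.

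Next I would invoke Theorem \ref{theorem1.2} to write any $X \in \Xi_G'$ in block form supported only on the positions of $A_G$, namely
$$X = \mathrm{Diag}\Big[X^{11}_{k_1 \times k_1}, X^{22}_{k_2 \times k_2}, X^{33}_{k_3 \times k_3}, X^{44}_{k_4 \times k_4}, \begin{bmatrix} O & X^{56}_{k_5 \times k_6} \\ X^{65}_{k_6 \times k_5} & O \end{bmatrix}, \begin{bmatrix} O & X^{78}_{k_7 \times k_8} \\ X^{87}_{k_8 \times k_7} & O \end{bmatrix}\Big].$$
Because all $d_i = 1$ and $\chi_i\,\chi_j \equiv 1$ on each dual pair $(i,j) \in A_G$ (and $\chi_i^2 \equiv 1$ on each self-dual index), the invariance relation $X^{ij} = C_{k_i\rho_i(g)}^{t} X^{ij} C_{k_j\rho_j(g)}$ reduces to a tautology, so each block $X^{ij}$ ranges freely over all $k_i \times k_j$ matrices over $\mathbb{F}$. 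By Lemma \ref{lemma3.0}, $X$ is non-singular precisely when every block indexed by $A_G$ is non-singular, which in particular requires each such block to be square.

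Then I would translate squareness into the stated equalities. The diagonal blocks $X^{ii}$ for $i = 1,2,3,4$ are automatically square of order $k_i$, so they impose no condition and may be taken to be $I_{k_i}$. The anti-diagonal blocks $X^{56}$ and $X^{78}$ have sizes $k_5 \times k_6$ and $k_7 \times k_8$; a rectangular matrix is never invertible, so non-singularity of the two $2\times 2$ block arrays forces $k_5 = k_6$ and $k_7 = k_8$. This yields the forward (``only if'') direction. Conversely, assuming $k_5 = k_6$ and $k_7 = k_8$, I would exhibit an explicit non-singular $X$ by setting every diagonal block and every paired block equal to the identity matrix of the appropriate square order; such an $X$ lies in $\Xi_G'$ and is manifestly invertible, hence defines a non-degenerate invariant bilinear form.

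I expect the only genuine subtlety to be the bookkeeping of the dual pairs: one must verify carefully that $\rho_5,\rho_6$ and $\rho_7,\rho_8$ are truly dual and not self-dual, since this is precisely what places their blocks off the diagonal and thereby forces the multiplicity constraints, whereas $\rho_1,\dots,\rho_4$ being self-dual is what frees the top-left blocks from any condition. The remaining steps are formal consequences of Theorem \ref{theorem1.2} and Lemma \ref{lemma3.0}, mirroring the counting already performed in Lemma \ref{lemma3.4}.
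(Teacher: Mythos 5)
Your proposal is correct and follows essentially the same route as the paper: the paper's proof of this lemma defers to Lemma \ref{lemma3.4}, which uses exactly your block decomposition over $A_G=\{(1,1),(2,2),(3,3),(4,4),(5,6),(6,5),(7,8),(8,7)\}$ together with Remark \ref{remark} and Lemma \ref{lemma3.0} to build a non-singular invariant $X$ when $k_5=k_6$ and $k_7=k_8$. Your explicit treatment of the ``only if'' direction (rectangular off-diagonal blocks cannot contribute full rank) is slightly more careful than what the paper writes out, but it is the same underlying argument.
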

\begin{proof}Follows from the proof of Lemma $\ref{lemma3.4}$.
\end{proof}
\begin{lemma}
	For $G=\mathbb{Z}_2 \times \mathbb{Z}_2 \times \mathbb{Z}_2$ and $\rho$= $\oplus_{i=1}^{8} k_{i}\rho_{i}$ an n degree representation of $G$,  $\rho$ always has a non-degenerate bilinear form.
	\label{lemma3.10}
\end{lemma}
\begin{proof} Follows from the proof of Lemma $\ref{lemma3.5}$.
\end{proof}
\begin{definition}
	The space $\Xi_{G}$ of invariant bilinear forms  is   called  degenerate  if it's all elements are degenerate.
\end{definition}
We  will  discuss about the degenerate  invariant  space in the later section.\\

\section{Dimensions of  spaces of  invariant bilinear forms under representations of  groups of order 8.}
\noindent  The  space of invariant   bilinear forms under an n degree representation is finite  dimensional and so are the  symmetric subspace and the skew-symmetric subspace. In this section  we calculate  the dimension of the space of invariant bilinear forms under a representation of a group of order 8.\\
\begin{theorem}
	If $\Xi_{G}$ is the  space of invariant bilinear forms  under an n degree representation  $\rho =\oplus_{i=1}^{r}k_{i}\rho_{i}$ of a group $G$ of order 8, then  dim$(\Xi_{G})=\sum_{(i,j) \in A_G}k_{i}k_j$. 	 
	\label{theorem4.1}
\end{theorem}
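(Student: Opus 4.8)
The plan is to reduce the dimension computation to the direct-sum decomposition already recorded in Theorem \ref{theorem1.2}. First I would fix an ordered basis $\underline{e}$ of $\mathbb{V}(\mathbb{F})$ adapted to the decomposition $\rho = \oplus_{i=1}^{r} k_i \rho_i$, so that each $g$ acts by the block-diagonal matrix $C_{\rho(g)} = \mathrm{Diag}[C_{k_1\rho_1(g)}, \ldots, C_{k_r\rho_r(g)}]$. Under the standard identification of a bilinear form $\mathbb{B}$ with its Gram matrix $X$ (via $\mathbb{B}(x,y)=x^{t}Xy$), invariance $\mathbb{B}(\rho(g)x,\rho(g)y)=\mathbb{B}(x,y)$ becomes $C_{\rho(g)}^{t} X C_{\rho(g)} = X$; this is precisely the linear isomorphism $\Xi_{G} \cong \Xi_{G}'$ already used in the text, so that $\dim \Xi_{G} = \dim \Xi_{G}'$.

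Next, I would invoke Theorem \ref{theorem1.2}, which asserts that $\Xi_{G}' = \bigoplus_{(i,j)\in A_G} \mathbb{W}_{(i,j)}$ and that each summand satisfies $\dim \mathbb{W}_{(i,j)} = k_i k_j$. Since the dimension of an internal direct sum of subspaces is the sum of their dimensions, the conclusion is immediate:
$$\dim \Xi_{G} = \dim \Xi_{G}' = \sum_{(i,j)\in A_G} \dim \mathbb{W}_{(i,j)} = \sum_{(i,j)\in A_G} k_i k_j,$$
which is the claimed identity.

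To keep the argument self-contained I would also verify the count $\dim \mathbb{W}_{(i,j)} = k_i k_j$ directly from the explicit block shapes in Remark \ref{remark}. Writing $X = [\,X^{ij}_{d_ik_i \times d_jk_j}\,]$, the invariance equation decouples block by block into $X^{ij} = C_{k_i\rho_i(g)}^{t} X^{ij} C_{k_j\rho_j(g)}$ for all $g \in G$, and this forces $X^{ij}=0$ whenever $(i,j)\notin A_G$. For $(i,j)\in A_G$ the admissible blocks are exactly those displayed in Remark \ref{remark}: when $\rho_i,\rho_j$ are one-dimensional the block is an unconstrained $k_i \times k_j$ matrix, contributing $k_ik_j$ free scalar entries; when $i=j=5$ (the two-dimensional case, for $D_4$ and $Q_8$) each $2\times 2$ sub-block is forced to be a scalar multiple of $I_{2}$ respectively $I_{2}^{-}$, so a $2k_5 \times 2k_5$ block carries exactly $k_5^{2} = k_5 k_5$ free scalars. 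In every case the number of free parameters in $X^{ij}$ equals $k_ik_j$, and distinct blocks involve disjoint parameter sets, so summing over $A_G$ recovers the formula.

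The only genuine work is the parameter count for the two-dimensional blocks: one must check that the relation $X^{ij}=C_{k_i\rho_i(g)}^{t}X^{ij}C_{k_j\rho_j(g)}$, imposed for the generators $a$ and $b$ read off from the tables $T_{D_4}$ and $T_{Q_8}$, collapses each $2\times2$ entry to a single scalar degree of freedom (a multiple of $I_{2}$ or $I_{2}^{-}$). This is a finite linear-algebra verification using the explicit images of $a$ and $b$; equivalently, it is Schur's lemma over the algebraically closed field $\mathbb{F}$, since $X^{ij}$ then intertwines $\rho_j$ with the dual $\rho_i^{*}\cong\rho_j$, whose endomorphism algebra is one-dimensional. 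Once this is in hand the summation is routine, and the theorem follows.
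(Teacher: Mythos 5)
Your proposal is correct and follows essentially the same route as the paper: both reduce to the block description in Remark \ref{remark}, observe that each block indexed by $(i,j)\in A_G$ carries exactly $k_ik_j$ free parameters (the content of Theorem \ref{theorem1.2}), and sum over $A_G$. Your explicit verification of the two-dimensional $(5,5)$ blocks for $D_4$ and $Q_8$ (or via Schur's lemma) is a welcome addition, since the paper leaves that count implicit in the Remark, but it does not change the underlying argument.
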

\begin{proof}
	Since for every   $X \in \Xi_{G}'$,  the non zero sub-matrices    $X^{ij}_{d_{i}k_{i} \times d_{j}k_{j}}$,  for each $(i,j) \in A_G$ we have,   $X^{ij}_{d_{i}k_{i} \times d_{j}k_{j}}$ = $C_{k_{i}\rho_{i}(g)}^{t}$$X^{ij}_{d_{i}k_{i} \times d_{j}k_{j}}$$C_{k_{j}\rho_{j}(g)}$. Now with reference of remark $\ref{remark}$ to span  $(i,j)^{th}$ sub-matrix of  $ X \in \Xi_{G}'$ it  needs maximum $k_{i}k_j$ linearly indepedent vectors from $\mathbb{M}_{n}(\mathbb{F})$. This completes the proof.
\end{proof}
\begin{corollary}
	The  space of invariant symmetric bilinear forms  under an n degree representation  $\rho =\oplus_{i=1}^{r}k_{i}\rho_{i}$ of a group $G \neq Q_8$ of order 8 has dimension $=\sum_{(i,i) \in A_G}\frac{k_{i}(k_{i}+1)}{2} + \sum_{\substack{(i,j) \in {A_G} \\ i \neq j}}\frac{k_ik_j}{2}$.
\end{corollary}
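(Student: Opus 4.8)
The plan is to realise the space of invariant \emph{symmetric} forms as the $+1$ eigenspace of the transpose map acting on the space $\Xi_G'$, and then read off its dimension block by block using the decomposition $\Xi_G' = \bigoplus_{(i,j) \in A_G} \mathbb{W}_{(i,j)}$ recorded in Theorem \ref{theorem4.1}. The starting observation is that duality is a symmetric relation, so $(i,j) \in A_G$ forces $(j,i) \in A_G$; consequently the transpose of a matrix $X = [X^{ij}]_{(i,j) \in A_G} \in \Xi_G'$, whose $(i,j)$ block equals $(X^{ji})^{t}$, again lies in $\Xi_G'$. Hence the symmetric invariant forms are exactly $\{X \in \Xi_G' \mid X = X^{t}\}$, and I would compute its dimension by separating the diagonal indices $(i,i)$ (the self-dual $\rho_i$) from the genuinely off-diagonal pairs.

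For the off-diagonal contribution, fix $(i,j) \in A_G$ with $i \neq j$; then both $(i,j)$ and $(j,i)$ appear in $A_G$, and by Remark \ref{remark} the blocks $X^{ij}$ and $X^{ji}$ are free matrices of sizes $k_i \times k_j$ and $k_j \times k_i$. The constraint $X = X^{t}$ couples them via $X^{ji} = (X^{ij})^{t}$, so one may choose $X^{ij}$ freely ($k_i k_j$ parameters) and $X^{ji}$ is then determined. Thus each unordered dual pair $\{i,j\}$ contributes $k_i k_j$ dimensions, which, distributed over the two ordered pairs, is exactly $\tfrac{k_i k_j}{2}$ each, summing to $\sum_{(i,j) \in A_G,\, i \neq j} \tfrac{k_i k_j}{2}$.

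For the diagonal contribution I would invoke Remark \ref{remark} again. When $G \neq Q_8$ and $(i,i) \in A_G$, the block $X^{ii}$ is either a free $k_i \times k_i$ matrix (the one-dimensional self-dual $\rho_i$) or, in the single case $G = D_4$, $i = 5$, a matrix of the form $A \otimes I_2$ with $A$ a free $k_5 \times k_5$ matrix. Since $(A \otimes I_2)^{t} = A^{t} \otimes I_2$, in both cases the symmetry condition $X^{ii} = (X^{ii})^{t}$ is equivalent to symmetry of the underlying coefficient matrix, which has $\tfrac{k_i(k_i+1)}{2}$ free entries. Summing over such $i$ gives the first term of the asserted formula, and adding the off-diagonal count yields the stated dimension.

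The main subtlety, and the reason the hypothesis $G \neq Q_8$ is essential, lies entirely in the two-dimensional self-dual block. For $D_4$ one must check that the $I_2$ scaffolding commutes with transposition, so that symmetry of the full $2k_5 \times 2k_5$ block reduces cleanly to symmetry of the $k_5 \times k_5$ coefficient matrix. For $Q_8$ the analogous block instead carries the factor $I_2^{-} = \bigl[\begin{smallmatrix} 0 & 1 \\ -1 & 0 \end{smallmatrix}\bigr]$, and because $(I_2^{-})^{t} = -I_2^{-}$ the symmetry requirement forces the coefficient matrix to be \emph{skew}-symmetric, giving $\tfrac{k_5(k_5-1)}{2}$ rather than $\tfrac{k_5(k_5+1)}{2}$. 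This genuinely different count for $Q_8$ is exactly what the excluded case would spoil, so the verification that the $D_4$ block behaves as claimed is the one step deserving care; everything else is bookkeeping over the block decomposition.
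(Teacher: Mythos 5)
Your proposal is correct and follows the same route the paper intends: the paper's own proof is just the one-line citation ``Follows from the proof of Theorem \ref{theorem4.1} and Remark \ref{remark},'' i.e.\ a block-by-block parameter count inside the decomposition $\Xi_G' = \bigoplus_{(i,j)\in A_G}\mathbb{W}_{(i,j)}$, which is exactly what you carry out. You supply the details the paper omits --- that transposition preserves $\Xi_G'$ because duality is symmetric, the pairing of off-diagonal blocks, and the $(A\otimes I_2)^t = A^t\otimes I_2$ versus $(I_2^-)^t = -I_2^-$ distinction that explains the exclusion of $Q_8$ --- so no further comparison is needed.
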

\begin{proof}
	Follows  from the proof of theorem $\ref{theorem4.1}$ and remark $\ref{remark}$.
\end{proof}
\begin{corollary}
	The space of invariant skew-symmetric bilinear forms  under an n degree representation  $\rho =\oplus_{i=1}^{r}k_{i}\rho_{i}$ of a group $G \neq Q_8$ of order 8 has  dimension $=\sum_{(i,i) \in A_G}\frac{k_{i}(k_{i}-1)}{2} + \sum_{\substack{(i,j) \in {A_G} \\ i \neq j}}\frac{k_ik_j}{2}$.
\end{corollary}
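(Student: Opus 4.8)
The plan is to split the invariant space into its symmetric and skew-symmetric parts and to count free parameters block by block, mirroring the proof of Theorem \ref{theorem4.1}. Since $\mathrm{char}(\mathbb{F}) \neq 2$, each $X \in \Xi_G'$ decomposes uniquely as $X = \tfrac{1}{2}(X + X^t) + \tfrac{1}{2}(X - X^t)$. Moreover, transposing the defining relation $X = C_{\rho(g)}^t X C_{\rho(g)}$ gives $X^t = C_{\rho(g)}^t X^t C_{\rho(g)}$, so $X^t \in \Xi_G'$ whenever $X \in \Xi_G'$; hence both summands lie in $\Xi_G'$ and $\Xi_G'$ is the internal direct sum of its symmetric and skew-symmetric subspaces. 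It therefore suffices to count the dimension of the skew-symmetric part.

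First I would record how transposition acts on the block decomposition of Remark \ref{remark}. Writing $X = [X^{ij}]_{(i,j) \in A_G}$, the $(i,j)$ block of $X^t$ is $(X^{ji})^t$, so the skew condition $X^t = -X$ reads $X^{ij} = -(X^{ji})^t$ for every pair of blocks. For an off-diagonal pair $(i,j),(j,i) \in A_G$ with $i \neq j$, this determines $X^{ji}$ from an arbitrary $X^{ij}$, so the unordered pair $\{i,j\}$ contributes exactly $k_i k_j$ free parameters; distributed over its two ordered representatives this is $\tfrac{k_ik_j}{2} + \tfrac{k_jk_i}{2}$, matching the second sum in the statement.

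Next I would treat the diagonal blocks $(i,i) \in A_G$. When $\rho_i$ is one-dimensional, Remark \ref{remark} shows $X^{ii}$ is an unconstrained $k_i \times k_i$ matrix, so the skew condition $X^{ii} = -(X^{ii})^t$ leaves $\tfrac{k_i(k_i-1)}{2}$ parameters. The only two-dimensional self-dual diagonal block occurring for $G \neq Q_8$ is the $\rho_5$ block of $D_4$, which by Remark \ref{remark} has the form $M \otimes I_2$ for the $k_5 \times k_5$ matrix $M = [x^{55}_{(2p-1)(2q-1)}]$; since $I_2^t = I_2$, the skew condition on $X^{55}$ is equivalent to $M = -M^t$, again giving $\tfrac{k_5(k_5-1)}{2}$. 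Summing the diagonal contributions $\sum_{(i,i)\in A_G}\tfrac{k_i(k_i-1)}{2}$ with the off-diagonal contributions yields the claimed dimension.

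The step demanding the most care is precisely this two-dimensional diagonal block, and it is also where the hypothesis $G \neq Q_8$ is essential. For $Q_8$ the $\rho_5$ block has the form $M \otimes I_2^-$ with $I_2^- = \begin{bmatrix} 0 & 1 \\ -1 & 0 \end{bmatrix}$ skew-symmetric, so $(I_2^-)^t = -I_2^-$; the skew condition on $X^{55}$ then becomes $M = M^t$, contributing $\tfrac{k_5(k_5+1)}{2}$ and interchanging the symmetric and skew counts. The crux of the argument is thus to confirm, using the irreducible-representation tables, that for each group of order $8$ other than $Q_8$ every self-dual diagonal block transposes through a symmetric $I_2$ (or is simply a scalar block), so that the symmetric/skew bookkeeping is not flipped; the remaining counting is routine and parallels Theorem \ref{theorem4.1}.
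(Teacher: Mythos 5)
Your proposal is correct and follows essentially the route the paper intends: its own proof is the one-line reference to Theorem \ref{theorem4.1} and Remark \ref{remark}, i.e.\ exactly the block-by-block parameter count you carry out, with the symmetric/skew splitting valid because $\Xi_G'$ is closed under transposition and $\mathrm{char}(\mathbb{F})\neq 2$. Your explicit treatment of the $D_4$ block $M\otimes I_2$ versus the $Q_8$ block $M\otimes I_2^-$ correctly identifies why $Q_8$ is excluded and is consistent with the paper's separate corollary for that group.
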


\begin{proof}
	Follows  from the proof of theorem $\ref{theorem4.1}$  and remark $\ref{remark}$.
\end{proof}
\begin{corollary}
	The space of invariant symmetric  (skew-symmetric) bilinear forms  under an n degree representation  $\rho =\oplus_{i=1}^{r}k_{i}\rho_{i}$ of a group $ Q_8$  has  dimension $=\sum_{i=1}^{4}\frac{k_{i}(k_{i} \pm 1)}{2}+ \frac{k_{5}(k_{5} \mp 1)}{2}$.
\end{corollary}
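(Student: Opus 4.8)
The plan is to exploit the block-diagonal structure of $\Xi_{Q_8}'$ and to track how the symmetry type of each diagonal block behaves under transposition. First I would record that for $G=Q_8$ every irreducible representation is self-dual: the four linear characters take values in $\{\omega^{8},\omega^{4}\}=\{1,-1\}$ and hence equal their own inverses, while the two-dimensional $\rho_5$ has real character and is likewise isomorphic to its dual. Consequently $A_G=\{(1,1),(2,2),(3,3),(4,4),(5,5)\}$, so by Theorem \ref{theorem4.1} and Remark \ref{remark} every $X\in\Xi_{Q_8}'$ is block-diagonal, $X=\mathrm{Diag}[X^{11}_{k_1},X^{22}_{k_2},X^{33}_{k_3},X^{44}_{k_4},X^{55}_{2k_5}]$. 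A block-diagonal matrix is symmetric (resp. skew-symmetric) if and only if each of its diagonal blocks is symmetric (resp. skew-symmetric), so the count splits into an independent computation on each block.

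For the four blocks $X^{ii}_{k_i}$ with $1\le i\le 4$, Remark \ref{remark} shows that the block is an arbitrary $k_i\times k_i$ matrix. Hence its symmetric forms span a space of dimension $\frac{k_i(k_i+1)}{2}$ and its skew-symmetric forms a space of dimension $\frac{k_i(k_i-1)}{2}$, exactly as in the general $G\neq Q_8$ case treated in the previous two corollaries.

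The decisive step is the $(5,5)$ block. By Remark \ref{remark} one has $X^{55}_{2k_5}=A\otimes I_2^{-}$ for an arbitrary $k_5\times k_5$ matrix $A=(x^{55}_{(2a-1)(2b)})$, where $I_2^{-}=\begin{bmatrix}0&1\\-1&0\end{bmatrix}$ is skew-symmetric. Since $(A\otimes I_2^{-})^{t}=A^{t}\otimes (I_2^{-})^{t}=-\,A^{t}\otimes I_2^{-}$, the block $X^{55}_{2k_5}$ is symmetric precisely when $A$ is skew-symmetric and skew-symmetric precisely when $A$ is symmetric. Thus the symmetry type is reversed relative to $A$: the symmetric part of the $(5,5)$ block has dimension $\frac{k_5(k_5-1)}{2}$, while its skew-symmetric part has dimension $\frac{k_5(k_5+1)}{2}$. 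This sign reversal, forced by $I_2^{-}$ being skew rather than symmetric (in contrast to the factor $I_2$ that appears for $D_4$), is exactly what excludes $Q_8$ from the preceding corollaries and is the only subtle point of the argument.

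Finally I would add the contributions. For the symmetric forms this yields $\sum_{i=1}^{4}\frac{k_i(k_i+1)}{2}+\frac{k_5(k_5-1)}{2}$, and for the skew-symmetric forms $\sum_{i=1}^{4}\frac{k_i(k_i-1)}{2}+\frac{k_5(k_5+1)}{2}$, which are jointly recorded by the claimed formula $\sum_{i=1}^{4}\frac{k_i(k_i\pm 1)}{2}+\frac{k_5(k_5\mp 1)}{2}$, with the upper signs giving the symmetric case and the lower signs the skew-symmetric case. The main obstacle is recognizing the transpose behaviour of the Kronecker factor $I_2^{-}$; once this is in hand, the remainder is a routine dimension count mirroring Theorem \ref{theorem4.1}.
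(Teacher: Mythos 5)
Your proposal is correct and follows essentially the same route as the paper, whose proof is simply a citation of Theorem \ref{theorem4.1} and Remark \ref{remark}: you decompose $X$ into the diagonal blocks dictated by the remark, count symmetric and skew-symmetric parts blockwise, and observe that the $(5,5)$ block has the form $A\otimes I_2^{-}$ with $I_2^{-}$ skew-symmetric, which swaps the roles of symmetric and skew-symmetric for $\rho_5$ and produces the $\mp$ in the formula. You have in fact supplied the detail (the transpose computation for the Kronecker factor) that the paper leaves implicit.
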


\begin{proof}
	Follows  from the proof of theorem $\ref{theorem4.1}$  and remark $\ref{remark}$.
\end{proof}

\section{Main results \label{proof1.1}}
Here we present the proofs of main theorems stated in the Introduction section.
\begin{flushleft}
	\textbf{Proof of theorem $\ref{theorem1.1}$}	
	Since $G$ is a group of order 8 and dimension of the  vector space  $\mathbb{V}(\mathbb{F})$ 	is n, if $G$ is $D_{4}$ or $Q_{8}$ then  r=5, $d_{i}$ =1 for i=1,2,3,4 and $d_{5}$ = 2. Now from equation  (\ref{splitn}) we have 
	$$ k_{1} + k_{2} +k_{3} +k_{4} +2k_{5}= n. $$

\end{flushleft}
From the proof of the lemma $\ref{lemma3.1}$  the number of  distinct   5-tuples ($k_{1}, k_{2}, k_{3},k_{4}, k_{5}$) is  $\sum_{s=0}^{[\frac{n}{2}]}\binom{n-2s+3}{3}$.\\
If G any of $\mathbb{Z}_{2} \times \mathbb{Z}_{2}\times \mathbb{Z}_{2}$,  $\mathbb{Z}_{4} \times \mathbb{Z}_{2}$,  $\mathbb{Z}_{8} $  we have  $r=8$ and  $d_{i} =1$ for $\displaystyle i=1,2,....,8$. Now from  equation (\ref{splitn}), we have 
$$ k_{1} + k_{2} +k_{3} +k_{4} +k_{5}+k_{6} +k_{7} +k_{8}= n, $$
so the  number of such  8-tuples ($k_{1}, k_{2}, k_{3},k_{4},k_{5},k_{6} ,k_{7}, k_{8}$) is $\binom{n+7}{7}$.\\
Thus from (\ref{splitn}) and Theorem $\ref{theorem2.2}$ the  number of  n degree representations (upto isomorphism) of a group $G$ of order 8  is $\sum_{s=0}^{[\frac{n}{2}]}\binom{n-2s+3}{3}$ for non abelian  and  $\binom{n+7}{7}$ for abelian.
$\hspace{2.4in}$  $\square$\\
\subsection{Degenerate invariant  spaces} \noindent From  Theorem $\ref{theorem1.1}$ and Lemmas $\ref{lemma3.1}$ to $\ref{lemma3.5}$, we have the number of $n$ degree representations whose corresponding  invariant  spaces of bilinear forms   contain only degenerate invariant bilinear forms are  $\binom{n+7}{7}-$$\sum_{s=0}^{[\frac{n}{2}]}\binom{s+1}{1}\binom{n-2s+3}{3}$, $\binom{n+7}{7}-$ $\sum_{s=0}^{[\frac{n}{2}]}\binom{s+2}{2}\binom{n-2s+1}{1}$, 0, 0, 0 
of the  groups    $\mathbb{Z}_{2} \times \mathbb{Z}_{4}$, $\mathbb{Z}_{8} $, $\mathbb{Q}_{8}$, $D_{4}$ and   $\mathbb{Z}_{2} \times \mathbb{Z}_{2} \times \mathbb{Z}_{2}$,   respectively. 
The   groups     $\mathbb{Z}_{8} $   $\&$  $\mathbb{Z}_{2} \times \mathbb{Z}_{4}$   have  representations whose corresponding spaces  of invariant forms  are  degenerate and the groups  $\mathbb{Z}_{2} \times \mathbb{Z}_{2} \times \mathbb{Z}_{2}$, $D_{4}$ $\&$    $Q_{8}$ have no  degenerate spaces.\\

\noindent \textbf{Proof of  theorem $\ref{theorem1.2}$}
Let $\mathbb{W}_{(i,j) \in A_G}$ be the subspaces of $\mathbb{M}_{n}(\mathbb{F})$ and $\Xi_{G}'$  the space of invariant bilinear forms of a group $G$. Let $X$ be an element of $\Xi_{G}'$ then 

\[C_{\rho(g)} ^{t}X C_{\rho(g)}= X \,\,and\,\, X=[ X^{ij}_{d_{i}k_{i} \times d_{j}k_{j}} ]_{(i,j) \in A_G} \]
\noindent Existence:\\
Let $X \in \Xi_{G}' $ then for every   $ (i,j) \in A_G$,  there exists at least one  $X_{(i,j)} \in \mathbb{W}_{(i,j) \in A_G}$, such that  $\sum_{(i,j) \in A_G} X_{(i,j)} =X$.\\

\noindent Uniqueness: \\
For every  $ (i,j) \in A_G$, suppose there exists  $Y_{(i,j)} \in \mathbb{W}_{(i,j) \in A_G} $, such that   $\sum_{(i,j) \in A_G}Y_{(i,j)} =X$, then  $\sum_{(i,j) \in A_G} X_{(i,j)} $ = $\sum_{(i,j) \in A_G} Y_{(i,j)}$ i.e.,  $Y_{(i',j')}-X_{(i',j')}$= $\sum_{(i,j) \neq (i',j')} (X_{(i,j)}-Y_{(i,j)})$. Therefore $Y_{(i',j')}-X_{(i',j')} \in $ $\sum_{(i,j) \neq (i',j')}  \mathbb{W}_{(i,j) \in A_G}  $ hence  $Y_{(i',j')}-X_{(i',j')}$ = O or $Y_{(i',j')}=X_{(i',j')}$ for all $(i',j') \in A_G$.\\Thus we have 
\begin{equation}
	\Xi_{G}' =\oplus_{(i,j) \in A_G} \mathbb{W}_{(i,j) \in A_G} \hspace{0.1cm} and \hspace{0.1cm} dim(\Xi_{G}') = \sum_{(i,j) \in A_G} dim(\mathbb{W}_{(i,j) \in A_G}).
	\label{wg}
\end{equation}

\begin{flushleft}
	Now 
	as $\mathbb{W}_{(i,j) \in A_G} = \{ X \in \mathbb{M}_{n}(\mathbb{F})\, |\,  \,  \, (i,j)^th$ block  $X_{d_{i}k_{i} \times d_j k_j}^{ij}$  a  sub - matrix of order $d_{i}k_{i} \times d_jk_j $ satisfying $X_{d_{i}k_{i} \times d_jk_j}^{ij}=  C_{k_{i}\rho_{i}(g)}^{t}X_{d_{i}k_{i} \times d_jk_j}^{ij}C_{k_{j}\rho_{j}(g)}$, $\forall g \in G$ and rest block is zero \},
	from the remark  \ref{remark}, we see that for  $(i,j) \in A_G$, the sub-matrices  $X_{d_{i}k_{i} \times d_jk_j}^{ij}$ in  $\mathbb{W}_{(i,j) \in A_G} $  have $k_i k_j$ free variables $\&$  $\mathbb{W}_{(i,j) \in A_G} \cong $ $\mathbb{M}_{k_i \times k_j }(\mathbb{F})$. Thus $\Xi_{G}'\cong $ $\oplus_{(i,j) \in A_G}\mathbb{M}_{k_i \times k_j}(\mathbb{F})$ and $dim(\mathbb{W}_{(i,j) \in A_G} )= k_{i}k_j$.
\end{flushleft}


Thus  substituting this in equation  (\ref{wg}) we get the dimension of $\Xi_{G}'$. \\

\noindent \textbf{Proof of  theorem $\ref{theorem1.3}$} Follows immediately from  Lemmas $\ref{lemma3.6}$ to $\ref{lemma3.10}$ .$\hspace{1.9 in}$ $\square$\\
\section { Representations over a field of characteristic 2.}
\begin{Remark}
	If characteristic of the field $\mathbb{F}$ is 2 then a  group $G$ of order $8$ has only  trivial irreducible representation.
	Therfore  n copies of irreducible  representation is written  as  
	$$\rho(g)=n\rho_{1}(g),$$
	where $\rho_{1}$ is the  trivial representation of a group $G$ of degree 1. So  the  representation $\rho$ is a trivial  representation of degree n. i.e,  \\
	$$  \rho(g)= I_{n},\mbox{\, for  all $g \in G$.} $$
\end{Remark}
\begin{Proposition}  The space of invariant bilinear forms under an n degree trivial representation of a group $G$ of order $8$ with char($\mathbb{F}$) = 2 is  isomorphic to  $\mathbb{M}_{n}(\mathbb{F})$. 
\end{Proposition}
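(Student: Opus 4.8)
The plan is to reduce the statement to the matrix description of the invariant space that was set up in the Introduction and then observe that, for a trivial representation, the defining invariance condition becomes vacuous. First I would invoke the Remark immediately preceding this Proposition, which establishes that over a field $\mathbb{F}$ with $\mathrm{char}(\mathbb{F})=2$ every $n$ degree representation $\rho$ of a group $G$ of order $8$ is trivial, so that $\rho(g)=I_{n}$ for all $g\in G$, whence $C_{\rho(g)}=I_{n}$ for all $g\in G$.

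Next I would recall the identification of $\Xi_{G}$ with its matrix model $\Xi_{G}' = \{X\in\mathbb{M}_{n}(\mathbb{F})\mid C_{\rho(g)}^{t}XC_{\rho(g)}=X,\ \forall g\in G\}$ with respect to a fixed ordered basis $\underline{e}$. The relevant point is that the correspondence sending a bilinear form $\mathbb{B}$ to the matrix $X$ with entries $X_{kl}=\mathbb{B}(e_{k},e_{l})$, so that $\mathbb{B}(x,y)=x^{t}Xy$, is an $\mathbb{F}$-linear bijection between $\Xi$ and $\mathbb{M}_{n}(\mathbb{F})$; this bijection is valid over an arbitrary field and in particular over a field of characteristic $2$. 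Under this isomorphism, $\Xi_{G}$ corresponds precisely to $\Xi_{G}'$.

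The core step is then a one-line substitution: plugging $C_{\rho(g)}=I_{n}$ into the defining relation, the condition $C_{\rho(g)}^{t}XC_{\rho(g)}=X$ reduces to $I_{n}^{t}XI_{n}=X$, i.e.\ $X=X$, which every $X\in\mathbb{M}_{n}(\mathbb{F})$ satisfies. Hence $\Xi_{G}'=\mathbb{M}_{n}(\mathbb{F})$, and composing with the isomorphism $\Xi_{G}\cong\Xi_{G}'$ yields $\Xi_{G}\cong\mathbb{M}_{n}(\mathbb{F})$, as claimed.

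I do not expect any genuine obstacle; the only point requiring a moment's care is the characteristic-$2$ validity of the form-to-matrix correspondence. This is safe because that correspondence is $\mathbb{F}$-linear and bijective independently of the characteristic, and the phenomena peculiar to characteristic $2$ (the collapse between symmetric and alternating forms) are irrelevant here, since we work with the full space of bilinear forms rather than with its symmetric or skew-symmetric subspaces.
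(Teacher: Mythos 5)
Your proof is correct: once the representation is trivial we have $C_{\rho(g)}=I_{n}$ for all $g\in G$, the invariance condition $C_{\rho(g)}^{t}XC_{\rho(g)}=X$ is vacuous, and the standard form-to-matrix bijection (valid in any characteristic) gives $\Xi_{G}\cong\Xi_{G}'=\mathbb{M}_{n}(\mathbb{F})$. The paper states this Proposition without any proof, and your argument is exactly the one implied by the Remark preceding it, so there is nothing to compare beyond noting that you have supplied the missing details.
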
 	

\begin{Proposition}  The space of symmetric invariant bilinear forms is the  direct sum of space of  skew-symmetric invariant forms and space of diagonal invariant forms.
\end{Proposition}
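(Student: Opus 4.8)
The plan is to work inside the identification furnished by the preceding Proposition: in characteristic $2$ the $n$-degree representation is trivial, $\rho(g)=I_n$ for all $g$, so the invariance condition $I_n^t X I_n = X$ is satisfied by every $X$, and hence the space of invariant bilinear forms is all of $\mathbb{M}_{n}(\mathbb{F})$. Under this identification the symmetric invariant forms are exactly the symmetric matrices $\mathrm{Sym}=\{X\in\mathbb{M}_{n}(\mathbb{F}) : X=X^t\}$, the diagonal invariant forms are the diagonal matrices $\mathrm{Diag}$, and the skew-symmetric invariant forms are the skew matrices, which in this section I read in the alternating sense $\mathrm{Skew}=\{X : X^t=-X,\ X_{ii}=0\ \forall i\}$. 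First I would flag the characteristic-$2$ subtlety explicitly: since $-1=1$, the bare condition $X=-X^t$ collapses to $X=X^t$, so the vanishing-diagonal (alternating) reading is the one that makes the stated decomposition non-trivial, and with it one still has the inclusions $\mathrm{Skew}\subseteq\mathrm{Sym}$ and $\mathrm{Diag}\subseteq\mathrm{Sym}$.

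Next I would verify the two defining properties of an internal direct sum. For the spanning property, given $X\in\mathrm{Sym}$ set $D=\mathrm{Diag}(X_{11},\dots,X_{nn})$ and $A=X-D$; then $A$ is a difference of symmetric matrices and hence symmetric, and it has all diagonal entries zero, so $A\in\mathrm{Skew}$ while $D\in\mathrm{Diag}$, giving $X=A+D\in\mathrm{Skew}+\mathrm{Diag}$. Conversely both summands lie in $\mathrm{Sym}$, so $\mathrm{Skew}+\mathrm{Diag}=\mathrm{Sym}$. For triviality of the intersection, any element of $\mathrm{Skew}\cap\mathrm{Diag}$ is simultaneously diagonal and of zero diagonal, hence the zero matrix, so $\mathrm{Skew}\cap\mathrm{Diag}=\{0\}$. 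Together these two facts yield $\mathrm{Sym}=\mathrm{Skew}\oplus\mathrm{Diag}$.

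The computation itself is elementary once the identification with $\mathbb{M}_{n}(\mathbb{F})$ is in place, so I expect no genuine obstacle in the algebra. The one point that needs care---and where a careless argument would fail---is the characteristic-$2$ coincidence of the symmetric and naive skew-symmetric conditions: I would therefore be explicit that \emph{skew-symmetric} here carries the alternating (vanishing-diagonal) meaning, since otherwise $\mathrm{Skew}=\mathrm{Sym}$ and the claimed direct sum would collapse. This is the only step I would double-check against the paper's own conventions before committing to the decomposition.
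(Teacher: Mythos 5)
Your proof is correct. The paper itself supplies no proof of this Proposition (nor of the one preceding it), so there is no argument to compare against; your write-up in effect fills a gap rather than duplicating one. The one point of substance --- that in characteristic $2$ the condition $X^t=-X$ collapses to $X^t=X$, so \emph{skew-symmetric} must be read in the alternating sense ($X^t=X$ with vanishing diagonal) for the statement to be non-trivially true --- is exactly the right thing to flag, and indeed the statement is false in characteristic $\neq 2$ (there the skew-symmetric matrices are not even contained in the symmetric ones), so the alternating reading is forced. With that convention, your decomposition $X=(X-D)+D$ into the off-diagonal part (symmetric with zero diagonal, hence alternating) and the diagonal part $D$, together with the observation that an alternating diagonal matrix is zero, establishes $\mathrm{Sym}=\mathrm{Skew}\oplus\mathrm{Diag}$ completely.
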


\begin{Note}  If char($\mathbb{F})=2$, the Maschke's theorem does not hold and there are composite representations that are not direct sums.
\end{Note}

\vskip2mm
Thus here we have completely characterised the representations of a group of order $8$ for having a non-degenerate invariant bilinear form over an algebraically closed field. Note that these results hold equally good when considered over a field of characteristic $\equiv1\pmod{8}$.
\vskip2mm
\noindent {\bf Acknowledgement} The first author would like to thank UGC, India for providing the research fellowship and authors are thankful to the Central University of Jharkhand, India for support to carry out this research work. The second author is thankful to the Babasaheb Bhimrao Ambedkar University, Lucknow, India where he revised the paper.


\end{document}